\DeclareMathOperator{\pow}{pow}
\DeclareMathOperator{\codim}{codim}
\DeclareMathOperator{\totalspace}{Tot} 
\DeclareMathOperator{\Crit}{Crit}
\DeclareMathOperator{\Critv}{Critv}
\DeclareMathOperator{\im}{im} 
\DeclareMathOperator{\ind}{ind} 
\DeclareMathOperator{\Aut}{Aut}
\DeclareMathOperator{\Span}{span}
\DeclareMathOperator{\Symp}{Symp}
\DeclareMathOperator{\Id}{Id}
\DeclareMathOperator{\Flow}{Flow}
\DeclareMathOperator{\PD}{PD}
\DeclareMathOperator{\Line}{\mathcal{L}}
\newcommand{\thicc}[1]{\pmb{#1}}
\newcommand{\Proj}{\mathbb{CP}}
\newcommand{\ProjOne}{\Proj^{1}}
\newcommand{\C}{\mathbb{C}}
\newcommand{\R}{\mathbb{R}}
\newcommand{\Z}{\mathbb{Z}}
\newcommand{\disk}{\mathbb{D}}
\newcommand{\bigO}{\mathcal{O}}
\newcommand{\delbar}{\overline{\partial}}
\newcommand{\Cinfty}{\mathcal{C}^{\infty}}
\newcommand{\Sthree}{(S^{3},\xi_{std})}
\newcommand{\Circle}{\mathbb{S}^{1}}
\newcommand{\half}{\frac{1}{2}}
\newcommand{\be}{\begin{enumerate}}
\newcommand{\ee}{\end{enumerate}}
\newcommand{\sphere}{\mathbb{S}}
\newcommand{\Mxi}{(M,\xi)}
\newcommand{\norm}[1]{\left\lVert#1\right\rVert}
\newcommand{\multisec}{\thicc{\mathfrak{s}}}
\newcommand{\ModSpace}{\mathcal{M}}
\newcommand{\hypersurface}{W}
\newtheorem{thm}{Theorem}[subsection]
\newtheorem{ex}[thm]{Example}
\newtheorem{defn}[thm]{Definition}
\newtheorem{lemma}[thm]{Lemma}
\newtheorem{rmk}[thm]{Remark}
\newtheorem{edits}[thm]{Editor notes}
\title{Symplectic mapping class relations from pencil pairs}
\author{Russell Avdek}
\date{\today}
\begin{document}

\begin{abstract}
We describe symplectic mapping class relations between products of positive Dehn twists along Lagrangian spheres in Weinstein $4$-manifolds, all of which are affine $\C$ varieties. The relations are obtained by applying classification results for Fano $3$-folds and polarized $K3$ surfaces of small genus to a general methodology -- finding pencil pairs.
\end{abstract}
\maketitle

\numberwithin{equation}{subsection}
\setcounter{tocdepth}{1}

\section{Introduction}

This article describes a general method for finding symplectic mapping class relations between products of positive Dehn twists on (affine algebraic) Weinstein manifolds $F$ of $\dim = 2n$ by studying complex projective varieties of $\dim_{\C}=n+1$. The general method consists of finding \emph{pencil pairs} and is known to experts. It recovers known mapping class relations on punctured Riemann surfaces such as the lantern relation \cite{AS:Pencil, Dehn:CollectedPapers, Johnson:Lantern}, described in \S \ref{Sec:GeneralizedLantern} and depicted in Figure \ref{Fig:Lantern}. In formally defining pencil pairs and working out some basic techniques for finding them (see \S \ref{Sec:ProjectiveModel}), we hope to inspire specialists in symplectic topology and algebraic geometry to further study the case $\dim F \geq 4$, for which few examples of such symplectic mapping class relations are known. Therefore our exposition will cover some relevant basics from both subject areas.

Our search for mapping class relations on high-dimensional Weinstein manifolds is motivated by recent advances in high-dimensional contact topology \cite{Breen:Folded, BHH, HH:Convex}. In \S \ref{Sec:ContactMotivations} we describe how these advances imply that relations between products of positive Dehn twists on Weinstein manifolds of any even dimension exist in abundance.

\begin{figure}[h]
\begin{overpic}[scale=.5]{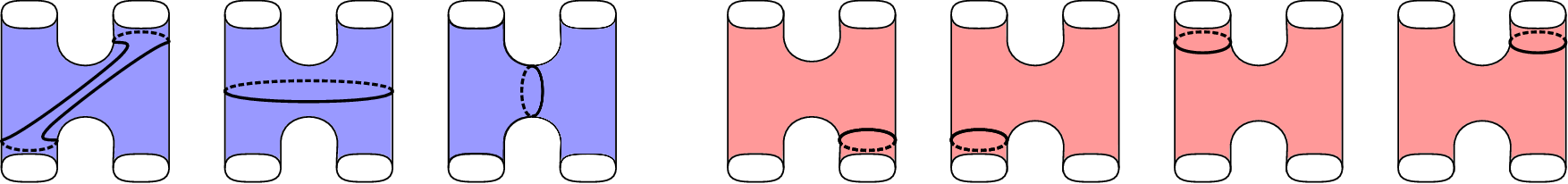}
\put(33, -1){$a$}
\put(19, -1){$b$}
\put(5, -1){$c$}
\end{overpic}
\vspace{2mm}
\caption{The lantern relation states that the product $\tau_{c}\tau_{b}\tau_{a}$ of Dehn twists along circles on the left is boundary-relative isotopic to a product of Dehn twists along the circles on the right. Throughout, products represent compositions of maps, so $\tau_{c}\tau_{b}\tau_{a}$ means that we apply $\tau_{a}$, then $\tau_{b}$, and finally $\tau_{c}$. The surfaces depicted are oriented by a normal vector which sticks out of the page along their front halves.}
\label{Fig:Lantern}
\end{figure}

In \cite{Oba:FourDRelation} Oba finds a symplectic mapping class relation on a Weinstein $4$-manifold by applying the classification of del Pezzo surfaces \cite{Fujita} to find a pencil pair. Our main application leverages classification results for polarized $K3$ varieties \cite{Mukai:Gleg10, SD:ProjectiveModel} and Fano $3$-folds \cite{Iskovskih:I, Iskovskih:II, MoriMukai} to find new pencil pairs, and so new examples of mapping class relations on Weinstein $4$-manifolds. Here is one such family of examples.

\begin{thm}\label{Thm:MainExample}
Let $Z \subset \ProjOne \times \ProjOne \times \Proj^{2}$ be the $K3$ surface cut out by a smooth complete intersection of $\deg=(1,1,2)$ and $\deg=(1,1,1)$ divisors. Let $(F, \beta_{F})$ be the Weinstein manifold given by the complement of a $\deg=(1,1,1)$ divisor in $Z$, with the symplectic structure determined by a $\deg=(1,1,1)$ line bundle. Then there is a compactly supported symplectomorphism $\tau_{\partial F} \in \pi_{0}\Symp^{c}(F, d\beta_{F})$ which can be factored into a product of $m$ Dehn twists along Lagrangian spheres in $(F, \beta_{F})$, where
\begin{equation*}
m=56, 60, 62, 64, 78.
\end{equation*}
\end{thm}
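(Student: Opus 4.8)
The plan is to run the pencil-pair machinery of \S\ref{Sec:ProjectiveModel} five times. Recall its output: a pencil pair — an appropriate Lefschetz pencil on a Kähler threefold, so that the general fibre is a surface and the affine fibre $F$ a Weinstein $4$-manifold — produces, in $\pi_{0}\Symp^{c}(F,d\beta_{F})$, a factorization $\tau_{\partial F}=\tau_{V_{1}}\cdots\tau_{V_{m}}$ of the boundary (fibered) Dehn twist into positive Dehn twists along the Lagrangian vanishing spheres $V_{i}$ of the pencil, the count $m$ and the spheres $V_{i}$ being read off from the topology of the degeneration. So the task is to realize $(F,d\beta_{F})$ as the common affine fibre of five pencil pairs whose twist counts are $18,32,34,36,40$.

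First I would pin down the geometry of the model. On $Z$ the class $\mathcal{O}_{Z}(1,1,1)$ restricts from an ample class of $\ProjOne\times\ProjOne\times\Proj^{2}$, so $F=Z\setminus D$ is an affine variety; a smooth $D\in|\mathcal{O}_{Z}(1,1,1)|$ has $D^{2}$ equal to the intersection number $(1,1,1)^{3}\cdot(1,1,2)$ of the corresponding hyperplane classes in $\ProjOne\times\ProjOne\times\Proj^{2}$, namely $18$, and adjunction on the $K_{3}$ surface $Z$ then gives $g(D)=10$. Thus $(Z,\mathcal{O}_{Z}(1,1,1))$ is a polarized $K_{3}$ surface of genus $10$, and $F$ with its induced Weinstein structure $d\beta_{F}$ (whose homotopy class is fixed by the $(1,1,1)$ line bundle) has ideal contact boundary the unit normal circle bundle of $D$ in $Z$ — a Seifert fibration over a genus-$10$ surface of Euler number $18$ — with $\tau_{\partial F}$ its fibered Dehn twist.

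Next I would construct the five pencil pairs. Mukai's study of curves, $K_{3}$ surfaces and Fano $3$-folds of genus $\le 10$ together with Saint-Donat's projective models \cite{Mukai:Gleg10,SD:ProjectiveModel} describe a general genus-$10$ $K_{3}$ surface through several projective models, and the classification of Fano $3$-folds \cite{Iskovskih:I,Iskovskih:II,MoriMukai} supplies the ambient threefolds $X$ in which $Z$ (up to deformation) appears as a linear section polarized by a class restricting to $\mathcal{O}_{Z}(1,1,1)$; for each such $X$ one takes a generic pencil of such sections with base curve projectively equivalent to $D$. A parametrized Bertini argument shows the general member is a smooth genus-$10$ $K_{3}$ surface and that, after a generic perturbation of the pencil within its linear system, every singular member has a single ordinary double point, so the pencil pair is Lefschetz; its twist count is then a Chern-number invariant of $X$, and the five inputs from the classification are chosen so that these come out $18,32,34,36,40$. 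Finally, because polarized genus-$10$ $K_{3}$ surfaces form an irreducible family and the complement of a smooth polarizing curve carries a canonical Weinstein deformation class, a Moser-type argument identifies each affine complement symplectomorphically with $(F,d\beta_{F})$, carrying each pencil's fibered boundary twist to the same element $\tau_{\partial F}$; this produces the five factorizations.

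The hard part will be this last identification. One must verify that the five affine hypersurface complements — each with the Weinstein structure inherited from its own ambient pencil — really are symplectomorphic to the \emph{single} $(F,d\beta_{F})$, and that the identifications intertwine the fibered boundary twists, so that all five words genuinely factor the same $\tau_{\partial F}$. This requires a careful parametrized-Bertini and monodromy analysis guaranteeing that the pencils can be made Lefschetz without creating spurious vanishing spheres, together with a deformation argument connecting the various projective models inside the moduli of polarized genus-$10$ $K_{3}$ surfaces. Once that is in place, computing the five twist counts is a matter of bookkeeping.
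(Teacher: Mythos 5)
Your strategy is the paper's own: this is precisely the $k=1$, genus-$10$ case of \S\ref{Sec:Dim3}, namely pencil pairs among Fano $3$-folds whose anticanonical $K_{3}$ divisor is the polarized genus-$10$ surface $(Z,\mathcal{O}_{Z}(1,1,1))$, glued through Mukai's projective model via Lemma \ref{Lemma:ProjectiveModel} and Theorem \ref{Thm:K3PencilPair}, with the twist counts coming from Theorem \ref{Thm:BoundaryRelation}; your computation $D^{2}=18$, $g(D)=10$ matches the paper. The genuine gap is that your argument is circular exactly where the theorem has content: you never exhibit the five Fano $3$-folds, never check that they have index $1$, $\int_{X}A_{X}^{3}=18$ and \emph{very ample} anticanonical bundle (the hypothesis needed to invoke the genus-$10$ model of Theorem \ref{Thm:K3Classification} -- ampleness alone is insufficient), and never compute the five twist counts; you simply say the inputs ``are chosen so that these come out $18,32,34,36,40$.'' But the theorem \emph{is} the assertion that these five values occur. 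Producing them requires (i) extracting from the classification \cite{Iskovskih:I, Iskovskih:II, MoriMukai, Fanography} the deformation classes with these invariants -- in the paper these are the entries 1-9, 2-11, 3-3, 3-4, 8-1, with $\chi(X)=0,-4,2,4,18$, and note that 3-3, the $(1,1,2)$ divisor in $\ProjOne\times\ProjOne\times\Proj^{2}$, is the ambient Fano of the $Z$ in the statement -- and (ii) evaluating Equation \eqref{Eq:TwistCount}, $m=-\chi(X)+2\chi(Z_{1})-\chi(B)$ with $\chi(Z_{1})=24$ and $\chi(B)=2-2g=-18$, for each of the five. This is not deferrable bookkeeping: the count depends delicately on the $\chi(B)$ term and on which Euler characteristics actually occur in the classification, and the five values are what make the resulting relations non-braid-like.

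The second soft spot is the step you yourself call ``the hard part.'' Irreducibility of the moduli of polarized genus-$10$ $K_{3}$ surfaces plus a Moser argument is not enough as stated: the pencil-pair mechanism needs a symplectomorphism of triples $(Z^{+},\omega_{\Line^{+}},B^{+})\rightarrow(Z^{-},\omega_{\Line^{-}},B^{-})$ carrying the base locus to the base locus, and Mukai's result is a statement about projective models of very amply polarized $K_{3}$ surfaces, not a blanket statement about all members of the moduli space. The paper handles this with Lemma \ref{Lemma:ProjectiveModel}: both $Z^{\pm}$ and their base curves are realized simultaneously as transverse sections of the single model $\ModSpace_{10}$ (a linear section of the $G_{2}$ fivefold in $\Proj^{13}$), and a path of sections avoiding the discriminant together with a parametric Darboux--Moser--Weinstein argument produces $\phi$ with $\phi(B^{+})=B^{-}$; the common page and the matching of the boundary twists then come for free. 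To turn your sketch into a proof you must either reproduce this projective-model argument (including the very-ampleness check for each of the five Fanos) or give a substitute that genuinely controls the base locus, not just the deformation class of the polarized surface.
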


We found six such families of mapping class relations in total by applying the Fano $3$-fold and $K3$ surface classifications. One recovers the relation of \cite{Oba:FourDRelation} (see \S \ref{Sec:ObaRevisited}). Because the number of Dehn twists in our relations are all distinct, they cannot be derived from well-known braid relations \cite{KhovanovSeidel}. See \S \ref{Sec:Braids}. The $\tau_{\partial F}$ above is a fibered Dehn twist \cite{CDvK} and all mapping class relations coming from pencil pairs are determined by factorizations of such $\tau_{\partial F}$. See \S \ref{Sec:AlgebraicPencilReview}.

Every pencil pair gives rise to an infinite family of pencil pairs by a generally-applicable ``cabling'' result described in Theorem \ref{Theorem:Cabling}. Consequently, each collection of mapping class relations we have described above determines infinitely many mapping class relations. The proof of Theorem \ref{Theorem:Cabling} relies on a general result concerning the symplectic topology of high-degree hypersurfaces in projective varieties, Theorem \ref{Theorem:BranchedCover}, which may be of independent interest.

\subsection{Conventions}

All algebraic varieties in this article are smooth and defined over $\C$. Throughout $\dim$ and $\dim_{\C}$ refer to real and complex dimension, respectively. On a closed manifold $X$ we use $\PD_{X}$ for Poincar\'{e} duality in either direction $H_{k}(X) \leftrightarrow H^{\dim X -k}(X)$. Cotangent disk bundles (determined by some Riemannian metric) over a smooth manifold $X$ will be denoted $\disk^{\ast}X \subset T^{\ast}X$. On $T^{\ast}X$, the canonical $1$-form is denoted $pdq$ whose differential, denoted $dp \wedge dq$ is the canonical symplectic form. All other symplectic manifolds in this article will be complex projective varieties $X, Z, B, \ModSpace \subset \Proj^{n}$ or affine varieties $F,W \subset \C^{n}$, with their symplectic structures inherited from the ambient space. For a function $h$ on a symplectic manifold with symplectic form $\omega$, the Hamiltonian vector field $X_{h}$ is defined by $dh = \omega(\ast, X_{h})$.

\subsection{Acknowledgments}

\begin{wrapfigure}{r}{0.07\textwidth}
	\centering
	\includegraphics[width=.07\textwidth]{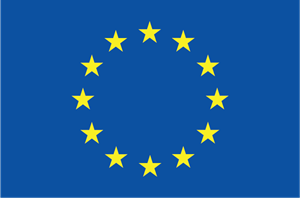}
\end{wrapfigure}

We thank Eduardo Alves da Silva, Yankı Lekili, and Takahiro Oba for interesting discussions. We're especially grateful to Marco Golla for discussions around complex surfaces, reviewing an early draft of the article, and for helping to expand the list of examples in \ref{Sec:GeneralizedLantern}. We also thank our anonymous referees for their thoughtful feedback which has greatly improved the quality of this article. This project has received funding from the European Union’s Horizon 2020 research and innovation programme under the Marie Skłodowska-Curie grant agreement No 101034255. We thank Cofund MathInGreaterParis and the Fondation Mathématique Jacques Hadamard for supporting us as a member of the Laboratoire de Math\'{e}matiques d'Orsay at Universit\'{e} Paris-Saclay during the completion of the first draft of the article. Finally, we thank the CNRS for supporting us as a member of the Institut de Mathématiques de Jussieu-Paris Rive Gauche at Sorbonne Université.

\section{Background}

Here we cover background material and provide motivation for our interest in finding symplectic mapping class relations between products of positive Dehn twists.

\subsection{Symplectic mapping class relations}

Let $(F, \beta_{F})$ be an \emph{exact symplectic manifold}, meaning that $\beta_{F} \in \Omega^{1}(F)$ is such that $\omega_{F} = d\beta_{F}$ is symplectic. We require that $F$ is a compact Liouville domain -- meaning that the vector field $X_{F}$ determined by $d\beta_{F}(X_{F}, \ast ) = \beta_{F}$ points transversely out of $\partial F$ -- or the completion of a compact Liouville domain, meaning in any case that it has finite type. We say that $(F, \beta_{F})$ is \emph{Weinstein} if $X_{F}$ is gradient-like for some function $f$ on $F$. See \cite{SteinToWeinstein} for details and background. The canonical examples of Weinstein manifolds are affine algebraic varieties $F \subset \C^{N}$ with $f = \norm{z}^{2}$ and $\beta = -df \circ J_{0}$, where $J_{0}$ is the standard complex structure on $\C^{N}$. The group of compactly supported symplectomorphisms will be denoted $\Symp^{c}(F, \omega_{F})$ and we are interested in studying $\pi_{0}\Symp^{c}(F, \omega_{F})$.

An \emph{exact Lagrangian sphere} $L \subset F$, where $\dim F=2n$, is a sphere $\sphere^{n}$ for which $\beta_{F}|_{TL} = df$ for some $f \in \Cinfty(L)$. A \emph{framing} on such a sphere is an identification of the sphere with the boundary of a disk, up to homotopy. We give a quick review of Dehn twists about such $L$, which appear in nature as monodromies of Milnor fibers of the singularities $\sum_{j=0}^{n} z_{j}^{2}$ on $\C^{n+1}$ \cite{Arnold:Singularity}. According to the Weinstein neighborhood theorem, every exact Lagrangian sphere $L$ has a neighborhood of the form
\begin{equation*}
(N_{F}(L), \beta_{F}) = (\disk^{\ast}\sphere^{n}, pdq + df), \quad f \in \Cinfty(\disk^{\ast}\sphere^{n}).
\end{equation*}
Assume the disk bundle is defined using the round metric of radius $1$. We define a Dehn twist $\tau_{L}$ locally as
\begin{equation*}
\tau_{L}=(-\Id_{\sphere^{n}})^{\ast}\Flow^{1}_{X_{b}}
\end{equation*}
where $b=b(\norm{p})$ is a function which equals $0$ near $\norm{p}=0$ and $\pi$ near $\norm{p}=1$, $X_{b}$ is the Hamiltonian vector field, and $(-\Id_{\sphere^{n}})^{\ast}$ is the symplectomorphism induced by the antipodal map $q\mapsto-q$ on $\sphere^{n}$. In words, $\Flow^{1}_{X_{b}}$ applies the time $b(\norm{p})$ geodesic flow to (co)vectors of length $\norm{p}$. In particular, when $\norm{p}=1$, the time $1$-flow will send a vector over $q\in \sphere^{n}$ to a vector over $-q$ and the $(-\Id_{\sphere^{n}})^{\ast}$ correction ensures that $\tau_{L}$ is the identity near $\partial \disk^{\ast}\sphere^{n}$. Extend $\tau_{L}$ to $F \setminus N_{F}(L)$ as the identity. The dependence on $b$ is irrelevant when we view $\tau_{L} \in \pi_{0}\Symp^{c}$. It can be seen by drawing pictures that when $\dim F = 2$, $\tau_{L}$ coincides with the usual notion of a (right-handed) Dehn twist along a simple closed curve on a Riemann surface. The action of $\tau_{L}$ on $H_{\ast}(F)$ is given by the \emph{Picard-Lefschetz formula}
\begin{equation}\label{Eq:PicardLefschetz}
(\tau_{L})h = \begin{cases}
h + (-1)^{\half (n+1)(n+2)}(h \bullet [L])[L] & h \in H_{n}(F),\\
h & \text{otherwise.}\\
\end{cases}
\end{equation}
Here $\bullet$ denotes the intersection pairing on $H_{n}(F)$. As a sanity check, we get
\begin{equation}\label{Eq:PLZeroSection}
[L]\bullet [L] = -\chi(L), \quad (\tau_{L})[L] = (-1)^{n+1}[L]
\end{equation}
which is expected since $\tau_{L}$ acts as the antipodal map on $L \simeq \sphere^{n}$. In the cases $n=2,6$, $\tau_{L}^{2}$ is smoothly, but not symplectically, isotopic to the identity \cite{Avdek:Liouville, Seidel:KnottedSpheres}. See \cite{KRW} for more on powers of Dehn twists.

A \emph{positive factorization} of $\phi \in \pi_{0}\Symp^{c}(F, \omega_{F})$, denoted as $\vec{L} = (L_{1}, \cdots L_{m})$, is a tuple of framed, exact Lagrangian spheres for which
\begin{equation*}
\phi = \tau_{L_{m}}\cdots \tau_{L_{1}} \in \pi_{0}\Symp^{c}(F, \omega_{F}).
\end{equation*}
We say that $m$ is the \emph{length} of the positive factorization, and write
\begin{equation*}
|\vec{L}| = m.
\end{equation*}
Two positive factorizations $\vec{L}$ and $\vec{L}' = (L_{1}', \cdots L_{m'}')$ of a $\phi$ are \emph{deformation equivalent} if $m=m'$ and for each $i$, $L_{i}$ is Hamiltonian isotopic to $L_{i}'$, whence we write $\vec{L}=\vec{L}'$. We are interested in such classes of $\vec{L}$ as they provide instructions for building Weinstein manifolds of $\dim = \dim F + 2$ as described in \S \ref{Sec:ContactMotivations}.

\begin{defn}\label{Def:PositiveRelation}
A \emph{positive relation on $F$} is a pair of positive factorizations $\vec{L}, \vec{L}'$ of some $\phi$, with $\vec{L}\neq \vec{L}'$.
\end{defn}

\subsection{Known relations and counting twists}\label{Sec:Braids}

There are a few well-known ways to modify a positive factorization, and so determine positive relations.
\be
\item \emph{The disjoint union relation}: If $L \cap L' = \emptyset$, then $\tau_{L}\tau_{L'} = \tau_{L'}\tau_{L}$.
\item \emph{The conjugacy relation}: For any two $L$ and $L'$, $\tau_{L}\tau_{L'}= (\tau_{\tau_{L}L'})\tau_{L}$.
\item \emph{The braid relation}: If $L$ and $L'$ intersect transversely in a single point, $\tau_{L}\tau_{L'}\tau_{L} = \tau_{L'}\tau_{L}\tau_{L'}$.
\ee
The first two are clear. The last can be verified directly by studying Lagrangian spheres in $A_{2}$ Milnor fibers and gives rise to an embedding of the braid group on $m$ strands into the $\pi_{0}\Symp^{c}$ of an $A_{m}$ Milnor fiber \cite{KhovanovSeidel}. When $\dim F=4$, this embedding is an isomorphism \cite{Evans:MappingClass, Wu:An}.

We seek to single out the positive factorizations which are not derived from braid groups and so are ``not obvious''. Let $T=T(F, \beta_{F})$ be the free group on Hamiltonian isotopy classes of exact framed Lagrangian spheres. Consider the normal subgroup $B=B(F, \beta_{F}) \subset T$ generated by
\be
\item $[L][L'][L]^{-1}[L']^{-1}$ when the classes have representatives $L, L'$ which are disjoint,
\item $[L][L'][L]^{-1}[\tau_{L}L']^{-1}$ for any $L, L'$,
\item and $[L][L'][L][L']^{-1}[L]^{-1}[L']^{-1}$ when the classes have representatives $L, L'$ intersecting transversely in $F$ a single point.
\ee
We have canonical homomorphisms
\begin{equation}\label{Eq:TauP}
\begin{gathered}
\tau: T \rightarrow \pi_{0}\Symp(F, \omega_{F}), \quad \tau[L] = \tau_{L},\\
p: T \rightarrow \Z, \quad p [L] = 1
\end{gathered}
\end{equation}
which factor through $T/B$. In words, $\tau$ sends formal products of exact Lagrangian spheres (and their inverses) to products of their positive (and negative) Dehn twists while $p$ measures the sum of the exponents. Therefore for a positive factorization $\vec{L}$, $p [\vec{L}] = |\vec{L}|$. By construction, $\ker \tau$ gives all relations in the subgroup $\im \tau$ of compactly supported symplectomorphisms generated by Dehn twists.

Each positive factorization $\vec{L}$ of some compactly supported symplectomorphism $\phi$ yields
\begin{equation*}
[\vec{L}]=[L_{k}]\cdots [L_{1}]
\end{equation*}
which can be viewed as an element of either $T$ or $T/B$. A positive relation $\vec{L}, \vec{L}'$ is \emph{non-braid-like} if $[\vec{L}] \neq [\vec{L}'] \in T/B$. It's clear that if $|\vec{L}|\neq |\vec{L}'|$, then the relation is non-braid-like. Examples of non-braid-like relations include the lantern and chain relations on punctured Riemann surfaces, cf. \cite[\S 15]{OS:SurgeryBook}.

We are interested in finding non-braid-like positive relations when $\dim F > 2$. In a way which is made precise in \cite{Keating:Stabilization}, the existence of such relations is more restrictive than the $\dim F = 2$ case. The following lemma gives an easy way to see that relations in the $\dim F=2$ case (such as the lantern, which has $p=\pm 1$) cannot be straightforwardly generalized to higher dimensions.

\begin{lemma}\label{Lemma:EvenTwists}
Following the notation of Equation \eqref{Eq:TauP}, suppose that $4$ divides $\dim F=2n$ and $\vec{L} \in T$ is such that $\tau \vec{L}$ acts trivially on $H_{n}(F)$ (which of course must be satisfied if $\tau \vec{L} = \Id_{F}$). Then $p \vec{L} \in 2\Z$.
\end{lemma}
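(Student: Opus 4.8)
The plan is to read off the parity of $p\vec{L}$ from the determinant of the induced action on middle-dimensional rational homology, the point being that this determinant is multiplicative over the word $\vec{L}$ and, when $4\mid\dim F$, is not equal to $1$ on a single twist. Write $\dim F = 2n$, so the hypothesis $4\mid\dim F$ says $n$ is even; this is exactly the regime in which \eqref{Eq:PicardLefschetz} is governed by $[L]\bullet[L] = -2$ for every exact Lagrangian sphere $L\subset F$. Put $c_n := (-1)^{\frac12(n+1)(n+2)}\in\{\pm 1\}$ for the sign in \eqref{Eq:PicardLefschetz}, and note that since $F$ has finite type the space $V := H_n(F;\Q)$ is finite-dimensional, so $\det$ of an automorphism of $V$ is defined. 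If $\vec{L}$ is the empty word there is nothing to prove, so assume it is not; then each sphere occurring in it has nonzero class in $V$ because $[L]\bullet[L] = -2\neq 0$, so in particular $V\neq 0$.

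First I would compute the determinant of a single twist. By \eqref{Eq:PicardLefschetz}, $(\tau_L)_{\ast}$ is the identity on $H_k(F;\Q)$ for $k\neq n$, while on $V$ it is $\Id_V$ plus the rank-one operator $h\mapsto c_n(h\bullet[L])[L]$. The rank-one update formula $\det(\Id + uv^{\top}) = 1 + v^{\top}u$ then gives
\begin{equation*}
\det\bigl((\tau_L)_{\ast}\colon V\to V\bigr) = 1 + c_n\,([L]\bullet[L]) = 1 - 2c_n,
\end{equation*}
which equals $-1$ when $n\equiv 2\bmod 4$ and $3$ when $n\equiv 0\bmod 4$. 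In either case it is a nonzero rational number, so $(\tau_L)_{\ast}$ is invertible on $V$ with $\det\bigl((\tau_L)_{\ast}^{-1}\bigr) = (1-2c_n)^{-1}$.

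Next I would run this along a word. Writing $\vec{L} = [L_{i_m}]^{\delta_m}\cdots[L_{i_1}]^{\delta_1}$ with $\delta_j\in\{\pm 1\}$, so that $p\vec{L} = \sum_{j=1}^{m}\delta_j$ by definition of $p$, multiplicativity of the determinant gives
\begin{equation*}
\det\bigl((\tau\vec{L})_{\ast}\colon V\to V\bigr) = \prod_{j=1}^{m}(1-2c_n)^{\delta_j} = (1-2c_n)^{\,p\vec{L}}.
\end{equation*}
By hypothesis $\tau\vec{L}$ acts trivially on $H_n(F)$, hence on $V$, so the left-hand side is $1$ and therefore $(1-2c_n)^{p\vec{L}} = 1$ in $\Q$. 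When $n\equiv 2\bmod 4$ this reads $(-1)^{p\vec{L}} = 1$, giving $p\vec{L}\in 2\Z$; when $n\equiv 0\bmod 4$ it reads $3^{p\vec{L}} = 1$, which forces $p\vec{L} = 0\in 2\Z$. Either way $p\vec{L}$ is even.

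I do not expect a genuine obstacle here; the only point requiring care is that the argument really does need $n$ even. For $n$ odd one has $[L]\bullet[L] = 0$, the rank-one correction in \eqref{Eq:PicardLefschetz} is trace-free, $\det(\tau_L)_{\ast} = 1$ on all of $H_{\ast}(F;\Q)$, and the method yields no information — consistent with the existence of the $p=\pm 1$ lantern relation in $\dim F = 2$. It is also worth recording that we only used the rational homology action, so the conclusion holds with $H_n(F)$ replaced by $H_n(F;\Q)$ in the hypothesis.
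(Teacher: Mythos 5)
Your proof is essentially the paper's argument: both compute the determinant of the induced action on $H_{n}(F)$ (equivalently the action on $\bigwedge^{top}H_{n}(F)$), use multiplicativity over the word, and read the parity of $p\vec{L}$ off the contribution of a single twist. The one place you diverge is the value of that contribution: the paper asserts $\det\bigl((\tau_{L})_{\ast}|_{H_{n}}\bigr)=-1$ uniformly for $n$ even, whereas taking \eqref{Eq:PicardLefschetz} together with $[L]\bullet[L]=-2$ at face value gives, as you note, $1-2c_{n}$, which is $3$ when $n\equiv 0 \bmod 4$. That branch cannot actually occur: $\tau_{L}$ preserves the free part of $H_{n}(F;\Z)$, so its determinant is $\pm 1$; what this flags is that the Picard--Lefschetz sign and the self-intersection $[L]\bullet[L]=\pm 2$ are coupled (the self-intersection of a Lagrangian sphere is $-2$ only for $n\equiv 2\bmod 4$ and $+2$ for $n\equiv 0\bmod 4$, with the sign in \eqref{Eq:PicardLefschetz} flipping in tandem), so that $\tau_{L}[L]=-[L]$ and the determinant is $-1$ in all even cases. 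Your deduction is unaffected, since in either branch you conclude $p\vec{L}\in 2\Z$ (via $(-1)^{p\vec{L}}=1$, respectively $3^{p\vec{L}}=1$), so the proof stands; just be aware that the true determinant is $-1$ for all even $n$, which is what the paper's matrix computation uses.
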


\begin{proof}
Consider the action of a single Dehn twist $\tau_{L}$ on the determinant line $\ell = \bigwedge^{top}H_{n}(F)$. Since $n$ is even, $[L]\bullet [L]=-2$ which isn't divisible by a squared integer, implying that $[L] \in H_{n}(F)$ is primitive. Hence we can extend $[L]$ to a basis of $H_{n}(F)$ with respect to which $\tau_{L} \in \Aut(H_{n})$ can be expressed as a matrix. The first row has $-1$ in the first entry by Equation \eqref{Eq:PLZeroSection}, with the rest of the entries being zeros. In the $j$th row for $j > 1$, Equation \eqref{Eq:PicardLefschetz} determines the first entry, the $j$th entry is $1$, and all other entries are zero. Therefore $\tau_{L}$ acts on $\ell$ by $-1$. For a general $\vec{L} \in T$, $\tau \vec{L}$ acts on the determinant line by $(-1)^{p \vec{L}}$. Since we're assuming that $\tau \vec{L}$ acts trivially on $H_{n}(F)$, it must act trivially on $\ell$ and $2$ must divide $p \vec{L}$.
\end{proof}

\subsection{Some contact-topological motivations}\label{Sec:ContactMotivations}

We outline the role of positive factorizations and positive relations in modern contact topology. As stated in the introduction, our goal is to demonstrate how recent advances in the field \cite{Breen:Folded, BHH, HH:Convex} imply the existence of many positive relations on high-dimensional Weinstein manifolds. This subsection is aimed at those readers who already have some exposure to contact topology, so our exposition will be light on details.

Given a positive factorization $\vec{L}$ as above we can build a Liouville manifold $W=W_{\vec{L}}$ of $\dim W_{\vec{L}} = \dim F + 2$ as follows: Take $W_{0}$ to be $(\disk \times F, r^{2}d\theta + \beta_{F})$ with the corner $\partial \disk \times \partial F$ rounded. The $\Lambda_{j} = (e^{j2\pi i/m}, L_{j})$ will be framed Legendrian spheres in the contact boundary of $W_{0}$ (after a slight perturbation of the boundary of $W_{0}$ to manage the exact $1$-forms $\beta_{F}|_{TL_{i}}$). Then $W$ is obtained by attaching $\ind = \half \dim W_{0}$ Weinstein handles along the $\Lambda_{j}$. Such a $(W, \beta)$ is naturally the total space of a \emph{Lefschetz fibration}, as will be discussed in the next subsection. It follows that 
\begin{equation*}
\chi(W) = \chi(F) + (-1)^{\half \dim W}|\vec{L}|.
\end{equation*}
When $(F, \beta_{F})$ is Weinstein, $(W, \beta_{W})$ is as well. In any case, the boundary of $W$ is naturally a contact manifold $(\Gamma, \xi_{\Gamma})$ described as an open book with page $(F, \beta_{F})$ and monodromy $\phi$ a product of the $\tau_{L_{i}}$. This contact structure depends only on $\phi \in \pi_{0}\Symp^{c}(F, d\beta_{F})$ \cite{BHH, Giroux:ContactOB, TW:Ob}. According to \cite{BHH, GP:Lefschetz} any Weinstein manifold of $\dim \geq 4$ can be constructed in this way with $(F, \beta_{F})$ Weinstein.

It follows that each positive relation $\vec{L}, \vec{L}'$ gives us two possibly distinct Liouville fillings $(W=W_{\vec{L}}, \beta)$ and $(W' =W_{\vec{L}'}, \beta')$ of the same contact manifold $(\Gamma, \xi_{\Gamma'})$. Consequently, positive relations help us to construct distinct Liouville and Weinstein fillings of a given contact manifold, cf. \cite{Oba:Fillings, OS:InfinitelyManyFillings}. For an alternate construction of distinct fillings using relations in symplectomorphism groups, see \cite{Zhengyi:InfiniteFillings}.

Provided a positive relation $\vec{L}, \vec{L}'$, the presentation of the contact boundaries of the Liouville manifolds $W$ and $W'$ as open books with the same monodromy give us an identifications of these boundaries. As described in \cite{Breen:Folded}, we can use the fillings $W, W'$ to determine a $(-\epsilon, \epsilon)$-invariant contact structure $\xi$ on
\begin{equation*}
N = (-\epsilon, \epsilon) \times S, \quad S = W \cap_{\Gamma} W'.
\end{equation*}
We say that $S$ is a \emph{convex hypersurface} with \emph{standard neighborhood} $(N, \xi)$. Identifying $S=\{0\} \times S$, the Euler number of $\xi|_{S}$ is computed
\begin{equation}\label{Eq:ChiContact}
\int_{S} e(\xi) = \chi(W) - \chi(W') = (-1)^{\half \dim W}\left(|\vec{L}| - |\vec{L}'|\right).
\end{equation}

Now we use these contact-topological techniques to establish the existence many positive relations.

\begin{thm}\label{Thm:TwistLength}
Let $n \in \Z_{\geq 2}$ and let $m$ be an integer which we require to be even when $n$ is odd. Then there is a Weinstein $(F, \beta_{F})$ of $\dim=2n - 2$ having positive relation $\vec{L},\vec{L}'$ with $|\vec{L}| - |\vec{L}'|=m$.
\end{thm}

Clearly whenever $m \neq 0$, the positive relation $\vec{L},\vec{L}'$ cannot be braid-like. Note that the condition on the parity of $m$ cannot be removed by Lemma \ref{Lemma:EvenTwists}. To prove the theorem, we start with an abstract construction.

\begin{ex}\label{Ex:ACToConvex}
Let $(X, J)$ be an almost complex manifold of $\dim X = 2n$ and let $\xi_{X} = TX$ be the associated almost contact structure on $M = \Circle \times X$. By \cite{BEM:OT} -- which includes a definition of almost contact structures -- there is a (genuine) contact structure $\xi$ on $M$ which is homotopic to $\xi_{X}$ through almost contact structures. By \cite{HH:Convex}, we may apply a $\mathcal{C}^{0}$ small perturbation to $\{0\} \times X \subset M$ to obtain an orientable convex hypersuface $S \subset M$. Since $TS = \R \oplus \xi$, the restriction of $\xi|_{\hypersurface}$ gives $(S, \xi)$ the structure of a stably almost complex manifold. The fact that $\xi_{X}$ and $\xi$ are homotopic implies that the cobordism class $(S, \xi)$ agrees with that of $(X, J)$ in the (stable) complex cobordism ring $\Omega^{U}_{2n}$. In particular
\begin{equation*}
\int_{S} e(\xi) = \int_{S} c_{n}(\xi) = \int_{X} c_{n}(TX) = \int_{X} e(TX) = \chi(X).
\end{equation*}
\end{ex}

\begin{lemma}
Every class $\eta \in \Omega^{U}_{2n}$ can be represented by a connected convex hypersurface when $n \geq 2$.
\end{lemma}

\begin{proof}
To start we note that every $\eta$ can be represented as a disjoint union $(X, J)$ of algebraic varieties, cf. \cite[\S VII]{Stong:Cobordism}. Apply the construction of the previous paragraph to such a possibly disconnected $X$, obtaining a $(S_{0}, \xi_{0})$ in some (possibly disconnected) $(M_{0}, \xi_{0})$ representing the bordism class $\eta$. After applying contact connected sums to the components of $(M_{0}, \xi_{0})$, we obtain a connected $\Mxi$ and a connected convex hypersurface representative $\hypersurface \subset \Mxi$ of the homology class $[S_{0}] \in H_{2n}(M)$ will be as desired.
\end{proof}

\begin{proof}[Proof of Theorem \ref{Thm:TwistLength}]
Let $n,m$ be as in the statement of the theorem. Choose some $\eta \in \Omega^{U}_{2n}$ with $c_{n}(\eta) = m$. Explicitly, one could take
\begin{equation*}
\eta = \begin{cases}
m[\Proj^{n}] - \frac{m}{2}[\ProjOne \times \Proj^{n-1}] & n\ \text{odd}, \\
m(n-1)[\Proj^{n}] - \frac{mn}{2}[\Proj^{1}\times \Proj^{n-1}] & n\ \text{even}.
\end{cases}
\end{equation*}

Applying the preceding lemma, we let $(S, \xi)$ be a connected convex hypersurface representing the class $\eta$. By \cite[Theorem 1.8]{Breen:Folded} there is some Weinstein $(F, \beta_{F})$ of $\dim=2n-2$ having a positive relation $\vec{L}, \vec{L}'$ determining $(\hypersurface, \xi)$. By Equation \eqref{Eq:ChiContact}, $\int_{S}e(\xi) = m$ determines the difference in the lengths of the positive factorizations.
\end{proof}

As convex hypersurfaces $S$ are only $\mathcal{C}^{0}$-dense, rather than $\mathcal{C}^{1}$-dense in $\dim \geq 5$ contact manifolds (cf. \cite{Chaidez:Blender}), it is very difficult to explicitly describe such convex surface neighborhoods $(N \supset S, \xi)$, even for convex hypersurfaces in well-studied $\Mxi$. In searching for symplectic mapping class relations, we seek to have some interesting examples of convex hypersurfaces at least for which the $(F, \beta_{F})$ can be described somewhat explicitly (e.g. as affine or quasi-projective varieties). In a follow-up article, we will study the $(N, \xi)$ determined by some of the positive relations in this article using the techniques of \cite{Avdek:Convex} and establish that each $\eta \in \Omega^{U}_{2n}$ can in fact be represented by a convex hypersurface with a tight neighborhood.

\subsection{Review of algebraic Lefschetz pencils}\label{Sec:AlgebraicPencilReview}

Let $\Line$ be a very ample line bundle on a smooth, closed variety $X$. By definition, this means that a $\C$-basis $(\multisec_{i})_{i=0}^{N}$ of $H^{0}(X, \Line)$, the space of holomorphic sections of $\Line \to X$, determines a holomorphic embedding
\begin{equation}\label{Eq:LineBundleProjectiveEmbedding}
\phi_{\Line}: X \rightarrow \Proj^{N}, \quad \phi_{\Line}(z) = [\multisec_{0}(z):\cdots:\multisec_{N}(z)], \quad N=\dim_{\C}H_{0}(X, \Line)-1.
\end{equation}
A change of basis corresponds to composing $\phi_{\Line}$ with a linear automorphism of $\Proj^{N}$, which is a biholomorphism. Since the complement of each hyperplane $\{ z_{i} = 0\} \simeq \Proj^{N-1} \subset \Proj^{N}$ is affine, we see that $\phi_{\Line}$ restricted to the complement of each
\begin{equation*}
Z_{i} = \{ \multisec_{i} = 0\} = (\{ z_{i} = 0\}\cap X) \subset X
\end{equation*}
gives an embedding into $\C^{N}$. So each $X \setminus \{ \multisec_{i} = 0\}$ is naturally an affine variety. Replacing $\Line$ with $\Line^{\otimes k} = \phi_{\Line}^{\ast}\bigO_{k}$ for $k \in \Z_{>0}$ gives more very ample line bundles on $X$. Since $\phi_{\Line}$ is holomorphic, 
\begin{eqnarray*}
\omega_{\Line} = \phi_{\Line}^{\ast}\omega_{\Proj^{N}} \in \Omega^{2}(X), \quad \omega_{\Proj^{N}} = \frac{i}{2\pi}\partial \delbar \log(\norm{z}^{2}) \in \Omega^{1}(\Proj^{N})
\end{eqnarray*}
is a symplectic form on $X$ with respect to which we can view $\phi_{\Line}$ as a symplectic embedding. This $\omega_{\Proj^{N}}$ is the Fubini-Study form, normalized to have volume $1$.

A pair $(\multisec_{0}, \multisec_{1})$ of distinct holomorphic sections of $\Line$ yields
\be
\item subvarieties $Z_{i} \subset X$ of $\dim_{\C}Z_{i} = \dim_{\C}X - 1$ when the sections $\multisec_{i}$ are transversely cut out,
\item a \emph{base locus} $B = Z_{0} \cap Z_{1}$ of $\dim_{\C}B = \dim_{\C}X - 2$ when $\multisec_{0}\oplus \multisec_{1}$ is transversely cut out, and
\item a holomorphic function $\pi = \multisec_{1}/\multisec_{0}: X \setminus Z_{0} \rightarrow \C$.
\ee
We say that $(\multisec_{0}, \multisec_{1})$ is a \emph{Lefschetz pencil for $\Line$} if the following criteria are met:
\be
\item The $Z_{i}$ are transversely cut out, and hence smooth with $\PD_{X}(\omega_{\Line}) = [Z_{i}] \in H_{\dim X-2}(X)$,
\item The $Z_{i}$ are transverse to one another, so $B$ is smooth with $\PD_{Z_{i}}(\omega_{\Line}) = [B] \in H_{\dim X-4}(Z_{i})$.
\item The function $\pi$ has only non-degenerate critical points $\Crit(\pi) \subset X \setminus Z_{0}$ with pair-wise distinct critical values in $\Critv(\pi) \subset \C$.
\ee

We remark that the transversality requirements in this definition are non-standard. It follows from Bertini's theorem that a generic pair of sections of an ample bundle will determine a pencil which extend to a $\phi_{\Line}$ as above. See, for example, \cite[\S 1.1]{GriffithsHarris} for a review. The proof of the following lemma is well-known. We give a quick sketch since variations of the proof will be used throughout the article. 

\begin{lemma}\label{Lemma:SympHolonomy}
The symplectomorphism type of the $(Z_{1}, \omega_{\Line}, B)$ associated to a pencil for $\Line$ is independent of sections $\multisec_{0}, \multisec_{1}$.
\end{lemma}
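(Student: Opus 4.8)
The plan is to exhibit any two pencils $(\multisec_0, \multisec_1)$ and $(\multisec_0', \multisec_1')$ for $\Line$ as the endpoints of a path in the space of pencils, and then use a Moser-type / parametrized-symplectomorphism argument to conclude that the corresponding $(Z_1, \omega_\Line, B)$ are symplectomorphic. First I would observe that the space $\mathcal{P} \subset \Proj(H^0(X,\Line)) \times \Proj(H^0(X,\Line))$ of pencils --- pairs of sections satisfying the three transversality/nondegeneracy conditions in the definition --- is the complement of a complex subvariety (the ``discriminant''), hence is connected: each of the three failure loci is a proper complex-algebraic subset of the parameter space by Bertini's theorem, and the complement of a proper subvariety in an irreducible variety is path-connected. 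So it suffices to prove that the symplectomorphism type of $(Z_1, \omega_\Line, B)$ is \emph{locally constant} along $\mathcal{P}$.

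For local constancy I would argue as follows. Along a smooth path $(\multisec_0^t, \multisec_1^t)_{t \in [0,1]}$ in $\mathcal{P}$, the divisors $Z_1^t = \{\multisec_1^t = 0\}$ form a smooth family of symplectic submanifolds of $(X, \omega_\Line)$, each containing $B^t = Z_0^t \cap Z_1^t$ as a symplectic submanifold. By the Thom isotopy / Moser trick applied in families (using that all the $Z_1^t$ are cut out transversely, so the family is a smooth submersion onto $[0,1]$ near each point), there is an ambient smooth isotopy $\psi_t$ of $X$ with $\psi_0 = \Id$ and $\psi_t(Z_1^0) = Z_1^t$, which can be arranged to also carry $B^0$ to $B^t$ (the base loci also vary in a smooth family, being transverse intersections). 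Pulling back, $\psi_t^* \omega_\Line|_{Z_1^t}$ is a smooth family of symplectic forms on the fixed manifold $Z_1^0$, all in the same cohomology class $\PD_{Z_1^0}^{-1}(\ldots)$ --- indeed all equal to $[\omega_\Line|_{Z_1^0}]$ since $[\omega_\Line] = \PD_X[Z_i^t]$ is independent of $t$ and restriction is cohomological --- and restricting to $B^0$ they again lie in a fixed class. A relative Moser argument (Moser on $Z_1^0$ rel the symplectic submanifold $B^0$, using a family of primitives for the closed $1$-form $\tfrac{d}{dt}\psi_t^*\omega_\Line$ that can be taken to vanish along $B^0$) then produces a further isotopy conjugating these forms, and composing gives a symplectomorphism $(Z_1^0, \omega_\Line, B^0) \xrightarrow{\sim} (Z_1^t, \omega_\Line, B^t)$ for $t$ small. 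Covering $[0,1]$ by finitely many such intervals finishes the argument.

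The main obstacle --- and the step deserving the most care --- is the relative version of the Moser argument: one must ensure that the symplectomorphisms can be chosen to respect the pair $(Z_1, B)$ simultaneously, not just $Z_1$ alone. Concretely, when choosing the primitive $\beta_t$ with $d\beta_t = \tfrac{d}{dt}\psi_t^*\omega_\Line$ that generates the Moser vector field on $Z_1^0$, one needs $\beta_t|_{TB^0}$ to be exact (so that a secondary Moser correction on $B^0$ matches up), which follows because the restricted forms on $B^0$ are also cohomologically constant; this is exactly the role of the second bullet in the definition of a Lefschetz pencil, guaranteeing $B$ is smooth with the expected Poincaré-dual class. The remaining points --- connectedness of $\mathcal{P}$ via Bertini, and smoothness of the family of divisors and base loci under transversality --- are standard and I would dispatch them quickly. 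One should also note that only $(\multisec_0, \multisec_1)$ up to the natural $\GL(2,\C)$-action matters for $(Z_1, B)$, but since we only track $Z_1$ and $B = Z_0 \cap Z_1$ this is automatic and does not require separate treatment.
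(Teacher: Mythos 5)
Your proposal is correct and follows essentially the same route as the paper: connect the two pencils by a generic path of section pairs (using Bertini to see that the locus where $Z_{1}$ or $B$ degenerates has complex codimension $\geq 1$, hence real codimension $\geq 2$), and then apply a parametric Darboux--Moser--Weinstein argument to the resulting family. The paper simply cites the Moser step to McDuff--Salamon, whereas you spell out the relative (rel $B$) version explicitly; this is a fleshing-out of the same argument rather than a different approach.
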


\begin{proof}
Consider a generic $T \in [0,1]$ family $(\multisec_{0}^{T}, \multisec_{1}^{T})$ of pairs of sections of $\Line$. By Bertini's theorem the set of such pairs for which $(\multisec_{0}, \multisec_{1})$ fails to be a Lefschetz-pencil has complex codimension $\geq 1$, that is, real codimension $\geq 2$. Therefore we can assume that the path of pairs of sections determines a path of pencils. Applying a Darboux-Moser-Weinstein argument \cite[\S 3.2]{MS:SymplecticIntro} to the $T$-family establishes the claim.
\end{proof}

When $(\multisec_{0}, \multisec_{1})$ is a Lefschetz pencil then $\pi$ is a \emph{Lefschetz fibration}. We will outline the essential properties and point to the references, e.g. \cite{AS:Pencil, Donaldson:Lefschetz, OS:SurgeryBook, Seidel:Book, Torricelli:Projective}, for a full definition. The complement
\begin{equation*}
W = X \setminus Z_{0}
\end{equation*}
is a smooth affine variety as described above. Therefore $W$ inherits a Liouville structure from 
\begin{equation*}
\beta=-\frac{i}{2\pi}dh\circ J_{0} \in \Omega^{1}(\C^{N}), \quad h = \log(\norm{z}^{2} + 1)
\end{equation*}
A computation yields $d\beta = \omega_{\Proj^{N}}$ in an affine chart so that $d\beta|_{TW} = \omega_{\Line}$. The plurisubharmonic function $h$ provided, in fact, presents $\C^{N}$ and $W$ as \emph{Stein manifolds}, determining Weinstein manifold structures \cite{SteinToWeinstein}. The regular fibers
\begin{equation*}
F_{z} = \pi^{-1}(z), \quad z \in \C \setminus \Critv(\pi)
\end{equation*}
of $\pi$ are then also smooth, affine varieties and hence naturally Weinstein. The closure of each $F_{z}$ in the closure $X$ of $W$ intersects $Z_{0}$ along $B$. By considering lifts of paths in $\C \setminus \Critv(\pi)$ via $\pi^{-1}$, we see that each $(F_{z}, \beta)$ is deformation equivalent to
\begin{equation*}
(F, \beta), \quad F := F_{0} = Z_{1} \setminus B.
\end{equation*}

The pencil $(\multisec_{0}, \multisec_{1})$ determines a compactly supported symplectomorphism of $(F, d\beta)$ in two ways. One way to express this symplectomorphism comes from studying the monodromy of $\pi$. Let $\eta_{i}$ be a collection of embedded, oriented, $[0,1]$-parameterized paths in $\C$ indexed by the $z_{i} \in \Critv(\pi)$, beginning at $0 \in \C$, ending at $z_{i}$, and disjoint away from $0 \in \C$. We require that the indices $i$ are ordered so that tangent vectors of the $\eta_{i}$ at $0$ are ordered counterclockwise. Each $\eta_{i}$ determines a Lagrangian disk $D_{i}$ in $W$ for which $\pi D_{i} = \eta$ and for which $L_{i} = \partial D_{i}$ is an exact Lagrangian sphere in $F$. The $\eta_{i}$ are \emph{vanishing paths} and the $D_{i}$ are their associated \emph{Lefschetz thimbles} which provide framings of the $L_{i}$. Let $\gamma \subset \C \setminus \Critv(\pi)$ be a parameterized simple closed curve based at $0$ and oriented counterclockwise, otherwise disjoint from the $\eta_{i}$, encircling all of the $\eta_{i}$, and encircling $\eta_{1}$ first. Then the monodromy morphism $\Phi_{\gamma} \in \Symp^{c}(F, \omega_{\Line})$ can be expressed as a product of Dehn twists
\begin{equation*}
\Phi_{\gamma} = \tau_{L_{\# \Crit(\pi)}} \cdots \tau_{L_{1}} \in \pi_{0}\Symp^{c}(F, \omega),
\end{equation*}
generalizing \cite{Arnold:Singularity}. Modifying the choice of paths $(\eta_{i})$ applies conjugacy relations to $\vec{L} = (L_{i})$.

Second, note that a collar neighborhood of the ideal boundary $\Gamma$ of $F$ is identical to a $\disk$-bundle neighborhood of $B \subset Z_{1}$ with its zero section removed. We can associate to this data a \emph{fibered Dehn twist} $\tau_{\partial F} \in \pi_{0}\Symp^{c}(F, \omega_{\Line})$, cf. \cite{CDvK, Oba:FourDRelation}. We give a brief review: Our collar neighborhood of $F$ with a neighborhood of $B$ removed is given as a finite symplectization $([0, 1]_{s}\times \Gamma, d(e^{s}\alpha))$. Here $\alpha$ is a contact form on $\Gamma$ whose Reeb flow $R_{\alpha}$ is $2\pi$-periodic, generating circular rotation on the disk bundle. Using this collar we can write
\begin{equation*}
\tau_{\partial F}(s, x) = \left(s, \Flow_{R_{\alpha}}^{g(s)}(x)\right)
\end{equation*}
for a smooth function $g = g(s)$ which is decreasing with $g(0) = 2\pi$ and $g(s) = 0$ for $s \geq 1$. Then $\tau_{\partial F}$ extends to the rest of $F$ as the identity map. The induced action on $H_{\ast}(F)$ by $\tau_{\partial F}$ is then clearly the identity.

The first statement of the following theorem is proved in \cite{Aroux:Pencil, Gompf:Pencil}, with special cases worked out in \cite{AA16, Oba:FourDRelation}. The count of critical points as a weighted sum of Euler characteristics follows easily from the Lefschetz hyperplane theorem, cf. \cite[Theorem 4.5]{Oba:FourDRelation}.

\begin{thm}\label{Thm:BoundaryRelation}
In the above notation
\begin{equation*}
\tau_{L_{|\vec{L}|}} \cdots \tau_{L_{1}} = \tau_{\partial F} \in \pi_{0}\Symp^{c}(F, \omega_{\Line})
\end{equation*}
In particular, $\tau_{\partial F}$ is isotopic to a symplectomorphism determined by a product of
\begin{equation*}
|\vec{L}| = \# \Crit\pi = (-1)^{\dim_{\C}X}\left(\chi(X) - 2\chi(Z_{1}) + \chi(B)\right)
\end{equation*}
positive Dehn twists along Lagrangian spheres.
\end{thm}

Note that $\tau_{\partial F}$ can be defined on any Liouville manifold $(F, \beta_{F})$ whose ideal contact boundary is a \emph{Boothby-Wang circle bundle} over some $(B, \omega)$. See \cite{CDvK} for further details. In this general setting, not all such $\tau_{\partial F}$ can be factorized as products of Dehn twists along exact Lagrangian spheres. For example, take $(F, \beta_{F})$ to be the complement of a $\deg \geq 2$ hypersurface in $\Proj^{n}, n>1$. The ideal boundary of $F$ is of Boothby-Wang type, so that $\tau_{\partial F}$ is defined but such $(F, d\beta_{F})$ contain no Lagrangian spheres since $\Proj^{n}$ contains no Lagrangian spheres. This is obvious for homological reasons when $n$ is even and non-trivially follows from \cite{Seidel:Graded} for $n$ is odd. When $\deg =2$, these $(F, d\beta_{F})$ coincides with $(\disk^{\ast}\mathbb{RP}^{n}, dp\wedge dq)$ \cite{Biran:LagrangianBarriers}. See \cite{Audin:LagrangianSkeletons, Torricelli:Projective} for generalizations to cotangent disk bundles of other symmetric spaces. A general criteria for finding fibered Dehn twists which cannot by factored into products of Dehn twists along Lagrangian spheres is given in \cite{CDvK} (attributed to Biran and Giroux). For a generalization of the definition of fibered Dehn twist provided above, see \cite{Perutz:Matching, Torricelli:Projective}.

\section{Symplectic mapping class relations from pencil pairs}\label{Sec:PencilPairs}

In this section we define pencil pairs and then appeal to Theorem \ref{Thm:BoundaryRelation} to show how they determine positive relations (Definition \ref{Def:PositiveRelation}). Then we will describe some low-dimensional examples.

\subsection{Pencil pairs}

\begin{defn}
Let $X^{\pm}$ be closed varieties equipped with very ample line bundles $\Line^{\pm}$, Lefschetz pencils $(\multisec_{0}^{\pm}, \multisec_{1}^{\pm})$ for the $\Line^{\pm}$, and associated subvarieties $B^{\pm} \subset Z^{\pm}_{i} \subset X^{\pm}$. If there is a symplectomorphism
\begin{equation*}
\phi: (Z_{1}^{+}, \omega_{\Line^{+}}) \rightarrow (Z_{1}^{-}, \omega_{\Line^{-}}), \quad \phi(B^{+})=B^{-},
\end{equation*}
we say that $(X^{\pm}, \Line^{\pm},\phi)$ is a \emph{pencil pair}.
\end{defn}

It is implicit in our notation that the pencil pair depends only on the $\Line^{\pm}$ rather than the specified sections $(\multisec_{0}^{\pm}, \multisec_{1}^{\pm})$. This abuse of notation is permitted by Lemma \ref{Lemma:SympHolonomy}. 

A pencil pair together with choices of vanishing paths associated to the $\pi^{\pm} = \multisec^{\pm}_{1}/\multisec^{\pm}_{0}$ determines a mapping class relation between products of positive Dehn twists on the Weinstein manifold
\begin{equation*}
(F, \omega_{F}) = \left(Z^{+}_{1} \setminus B, \omega^{+}|_{TZ_{1}^{+}}\right)
\end{equation*}
as follows: We have a collection $\vec{L}^{+} = (L^{+}_{i})_{i=1}^{\# \Crit \pi^{+}}$ of Lagrangian spheres in $(F, \omega_{F})$ determined by the holomorphic function $\pi^{+}=\multisec^{+}_{1}/\multisec^{+}_{0}$ and a choice of matching paths for $\Critv \pi^{+}$. Here we recall that $F$ is a smooth fiber of $\pi^{+}$. We also have a collection $\phi^{-}\vec{L}^{-} = (\phi^{-1}L^{-}_{i})_{i=1}^{\# \Crit \pi^{-}}$ of Lagrangian spheres in $(F, \omega_{F})$ determined by $\pi^{-}$ and a choice of matching paths for $\Critv \pi^{-}$. Then by Theorem \ref{Thm:BoundaryRelation},
\begin{equation*}
\tau_{L_{|\vec{L}^{+}|}^{+}} \cdots \tau_{L_{1}^{+}} = \tau_{\phi^{-1}L_{|\vec{L}^{-}| }^{-}} \cdots \tau_{\phi^{-}L_{1}^{-}} \in \pi_{0}\Symp_{0}(F, \omega_{F}).
\end{equation*}
In other words, the pair $\vec{L}^{+}, \phi^{-1}\vec{L}^{-}$ is a positive relation. Theorem \ref{Thm:BoundaryRelation} also yields
\begin{equation*}
|\vec{L}^{+}| - |\vec{L}^{-}| = \# \Crit \pi^{+} - \#\Crit \pi^{-} = (-1)^{\dim_{\C}X^{\pm}}(\chi(X^{+}) - \chi(X^{-})).
\end{equation*}
In light of the results of \S \ref{Sec:Braids}, this formula tells us that whenever we have a pencil pair with $\chi(X^{+}) \neq \chi(X^{-})$, then the associated mapping class relation is non-braid-like.

\subsection{The case $\dim_{\C}X^{\pm} =2$}

Suppose that $X$ is a closed variety of $\dim_{\C}X=2$ equipped with a very ample line bundle $\Line$ having a pencil with associated holomorphic map $\pi$. Then our divisor $Z_{1}$ is a Riemann surface and base locus $B \subset Z_{1}$ is a set of points. Applying Poincar\'{e} duality and the adjunction formula,
\begin{equation}\label{Eq:4dChernNumbers}
\begin{gathered}
\# B = [Z_{1}]^{2} = \int_{Z_{1}}\omega_{\Line} = \int_{X}c_{1}(\Line)^{2},\\
\chi(Z_{1}) = \int_{Z_{1}}c_{1}(X) - [Z_{1}]^{2} = \int_{X}(c_{1}(X) - c_{1}(\Line))\wedge c_{1}(\Line)\\
\implies \# \Crit \pi = \int_{X}c_{2}(X) + (c_{1}(\Line) - 2c_{1}(X))\wedge c_{1}(\Line).
\end{gathered}
\end{equation}
Since the symplectomorphism type of $(Z_{1}, \omega_{\Line}, B)$ is then determined by $\chi(Z_{1})$, the symplectic area of $Z_{1}$, and $\# B$ (which is redundant), the following theorem is obvious.

\begin{lemma}\label{Lemma:LowDimPencil}
Let $X^{\pm}$ be a pair of $\dim_{\C}=2$ varieties equipped with very ample line bundles $\Line^{\pm} \to X^{\pm}$. Then there is a $\phi$ for which $(X^{\pm}, \Line^{\pm}, \phi)$ is a pencil pair iff the quantities
\begin{equation*}
\int_{X^{\pm}}c_{1}(\Line^{\pm})^{2}, \quad \int_{X^{\pm}}c_{1}(\Line^{\pm})c_{1}(X^{\pm})
\end{equation*}
are independent of their $\pm$ signs.
\end{lemma}

Using the fact that $c_{1}(\Line^{\otimes k}) = kc_{1}(\Line)$, the following lemma follows easily from Equation \eqref{Eq:4dChernNumbers}.

\begin{lemma}\label{Lemma:LowDimCabling}
Let $(X^{\pm}, \Line^{\pm}, \phi)$ be a pencil pair with $\dim_{\C}X^{\pm} = 2$. Then for all $k \geq 1$, there are $\phi_{k}$ for which the $(X^{\pm}, (\Line^{\pm})^{\otimes k}, \phi_{k})$ are pencil pairs.
\end{lemma}

\subsection{Generalized lantern relations}\label{Sec:GeneralizedLantern}

The following family of examples is well-known: Consider
\begin{equation*}
X^{+}=\Proj^{2},\quad  c_{1}(\Line^{+})=2\omega_{\Proj^{2}}, \quad X^{-} = \ProjOne \times \ProjOne, \quad c_{1}(\Line^{-}) = (2\pi_{1}^{\ast} + \pi_{2}^{\ast})\omega_{\ProjOne}.
\end{equation*}
Here the $\pi_{i}$ denote projections onto the $\ProjOne$ factors of $X^{-}$. A quick calculation shows that these $(X^{\pm}, \Line^{\pm})$ form a pencil pair by Lemma \ref{Lemma:LowDimPencil}. The $Z^{\pm}_{1}$ are $\ProjOne$s each having self-intersection number $4$. The contact $3$-manifold given by the boundary of a tubular neighborhood of a $Z^{\pm}_{1} \subset X^{\pm}$ is the standard lens space $(L(4, 1), \xi_{std})$ given by reducing the standard contact $3$-sphere $\Sthree = \partial(\disk^{4}, \beta_{0})$ by the $\Z/4\Z$ action generated by $z \mapsto iz$ for $z \in \sphere^{3} \subset \C^{2}$. The $W^{\pm} = X^{\pm} \setminus Z^{\pm}_{1}$ are its unique symplectic fillings (modulo symplectic blow-up) according to McDuff \cite{McDuff:RationalRuled}. The Lefschetz fibration on $W^{+}$ determined by a holomorphic Lefschetz pencil has fiber a $4$-punctured sphere with three critical points. As noted after Theorem \ref{Thm:BoundaryRelation}, $W^{+}$ is symplectomorphic to a $\disk^{\ast}\mathbb{RP}^{2}$. The Lefschetz fibration on $W^{-}$ has fiber a $4$-punctured sphere with four critical points. As observed by Auroux and Smith \cite[\S 5.2]{AS:Pencil} the Lefschetz fibration on $W^{+}$ has monodromy given the left hand side of the lantern relation as it appears in Figure \ref{Fig:Lantern}. An analysis similar to that of \cite{AS:Pencil} reveals that the monodromy of the Lefschetz fibration on $W^{-}$ is given by the right-hand side of the lantern relation as it appears in Figure \ref{Fig:Lantern}. See \cite[\S 5]{Torricelli:Projective}. Lemma \ref{Lemma:LowDimCabling} yields pencil pairs for the the $(X^{\pm}, (\Line^{\pm})^{\otimes k})$ for each $k \in \Z_{>0}$. We can view these pencil pairs as generalizing the lantern relation, since for any $k$ the difference in the number of critical points is always $\chi(X^{-}) - \chi(X^{+}) = 1$.

\begin{rmk}
As we learned from reading \cite{Luo:Relations}, the lantern relation was first found by Dehn \cite[p.333]{Dehn:CollectedPapers} and then later rediscovered by Johnson \cite{Johnson:Lantern}, to whom it is often attributed.

An alternative, higher-dimensional generalization of the lantern relation (associated to the total space of $\disk^{\ast}\Proj^{2}$ rather than $\disk^{\ast}\mathbb{RP}^{2}$) is described in \cite{Torricelli:Projective}.
\end{rmk}

Now we describe how the above $k \in \Z_{> 0}$ family fits into a larger collection of examples starting with a general construction. Let $\Sigma$ be a closed Riemann surface of Euler characteristic $\chi$, choose some integer $d > 1 + g_{\Sigma} = 3-\chi$, and let $\ell_{\chi, d} \to \Sigma$ be a line bundle of $\deg \ell_{\chi, d} = -d$. Therefore the dual line $\ell_{\chi, d}^{\ast} \to \Sigma$ has $\deg = d$. Write $X_{\chi, d} = \mathbb{P}(\C \oplus \ell_{\chi, d})$ for the projectivization, which is a complex ruled surface,
\begin{equation*}
\ProjOne \to X_{\chi, d} \xrightarrow{p} \Sigma.
\end{equation*}
See for example \cite[Example 4.26]{MS:SymplecticIntro}. Since $X_{\chi, d}$ is a $2$-sphere bundle with a section it follows, e.g. from a Gysin sequence, that $\chi(X_{\chi, d}) = 2\chi$. The rank $2$ bundle $(\C \oplus \ell_{\chi, d})^{\ast} \simeq \C \oplus \ell_{\chi, d}^{\ast}$ over the total space of $\C \oplus \ell_{\chi, d}$ descends to a line bundle $\Line_{\chi, d} \to X_{\chi, d}$. Sections $\multisec \in H^{0}(X_{\chi, d}, \Line_{\chi, d})$ of $\Line_{\chi, d}$ have the form
\begin{equation*}
\multisec[z_{0}, z_{1}] = az_{0} - \multisec^{\ast}(z_{1}), \quad a, z_{0} \in \C, \quad z_{1} \in \ell_{\chi, d}, \quad \multisec^{\ast} \in H^{0}(\Sigma, \ell_{\chi, d}^{\ast}).
\end{equation*}
Since $\ell_{\chi, d}^{\ast}$ is very ample, so is $\Line_{\chi, d}$. A section $\multisec_{si}$ of $\Line_{\chi, d}$ for which $a=0$ and $\multisec^{\ast}$ is generic will have singular zero locus $Z_{si}$ whose irreducible components are the $[1:0]$ submanifold together with $d$ fibers of $p$. Take a generic $a \neq 0$ section $\multisec_{sm}$ close to $\multisec_{si}$ whose zero locus $Z_{sm}$ is smooth. Then $Z_{sm}$ is obtained by smoothing the singularities of $Z_{si}$, so $\chi(Z_{sm}) = \chi$ and $\PD_{X_{\chi, d}}[Z_{sm}] = c_{1}(\Line_{\chi, d})$. Analyzing the intersections of $Z_{sm}$ and $Z_{si}$ (which have the same $H_{2}(X_{\chi, d})$ class) yields $\int_{X_{\chi, d}} c_{1}(\Line_{\chi, d})^{2} = d$. The adjunction formula applied to $Z_{sm}$ gives $c_{1}(TX_{\chi, d}) = \chi + d$. Combining our calculations with Equation \eqref{Eq:4dChernNumbers}, a Lefschetz fibration associated to a pencil $(\multisec_{0}, \multisec_{1})$ for $\Line_{\chi, d}^{\otimes k} \to X_{\chi, d}$ has
\be
\item $Z_{0, \chi, d, k} = \multisec_{0}^{-1}(0)$ with $[Z_{0, \chi, d, k}] = dk^{2}$ and $\chi(Z_{0, \chi, d, k}) = k(\chi + d - dk)$,
\item total space $W_{\chi, d, k} = X_{\chi, d} \setminus Z_{0, \chi, d, k}$ with $\chi(W_{\chi, d, k}) = 2\chi - k\chi - kd + dk^{2}$,
\item $\# \Crit = 3dk^{2} + 2(\chi - kd - k\chi)$ critical points of $\pi = \multisec_{1}/\multisec_{0}: X_{\chi, d, k} \to \C$, and
\item non-singular fiber of $\pi$ of genus $g_{\chi, d, k} = 1 + \frac{k}{2}(\chi + d-dk)$ with $dk^{2}$ punctures.
\ee

Note that $X_{2,2} \to \ProjOne$ is a trivial fibration, and the symplectic manifold we get is the second Hirzebruch surface, identical to $(X^{-}, \omega_{\Line^{-}})$ above. A quick calculation shows that for any $m, k > 0$, $(\Proj^{2}, \bigO_{mk})$ and $(X_{\chi, m^{2}}, \Line_{\chi, m^{2}}^{\otimes k})$, $\chi=m(3-m)$ are a pencil pair via Lemma \ref{Lemma:LowDimPencil}.

\begin{thm}\label{Thm:PtwoPonePoneMappingClass}
Let $F_{m,k}$ be an oriented surface of genus $g_{m,k} = 1+\half mk(mk-3)$ with $(mk)^{2}$ boundary components. Writing $\chi=m(3-m)$, a boundary Dehn twist $\tau_{\partial F_{m,k}}$ can be factorized as products of
\be
\item $(mk)^{2}$ Dehn twists along the individual boundary components of $F_{m,k}$ determined by $\tau_{\partial F_{m,k}}$,
\item $3(mk)^{2} - 2m(3k + m - 3)$ Dehn twists determined by a pencil for $(X_{\chi, m^{2}}, \Line_{\chi, m^{2}}^{\otimes k})$, and
\item $3((mk)^{2} - 2mk + 1)$ Dehn twists determined by a pencil $(\Proj^{2}, \bigO_{mk})$.
\ee
For the $k=1, m=2$ case the first two collections of curves coincide, and the collections of curves from the last two items give the classical lantern relation.
\end{thm}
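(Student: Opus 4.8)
The plan is to show that Theorem \ref{Thm:PtwoPonePoneMappingClass} is essentially a bookkeeping corollary of the general pencil-pair machinery of \S\ref{Sec:PencilPairs} together with the explicit geometry of the two ruled/projective surfaces, once we fix notation. First I would identify the common Weinstein page $(F_{l,k}, \omega)$ appearing in the relation. Working from the $(X_{l}, \Line_{l}^{k})$ side, I claimed above that a generic member $Z_{1}$ of such a pencil is a closed surface $\Sigma$ with self-intersection $[Z]\bullet[Z] = (kl)^{2}d$-type correction; I would carefully recompute the genus of $Z_{1}$ and the number of base points $\#B = [Z_{1}]^{2}$ using adjunction and $c_{1}(\Line_{l}^{k}) = k c_{1}(\Line_{l})$, arriving at $g_{l,k} = 1 + \tfrac12 kl(kl-3)$ and $\#B = (kl)^{2}$, which pins down $F_{l,k} = Z_{1}\setminus B$ up to symplectomorphism by Theorem \ref{Thm:LowDimPencil} (only $\chi$, area, and $\#B$ matter). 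The same computation on the $(\ProjOne\times\ProjOne\text{-type surface } X_l)$ side and on the $(\Proj^{2}, \bigO_{kl})$ side must yield the \emph{same} $(g_{l,k}, (kl)^{2})$; this is exactly the pencil-pair condition, and I would verify it by checking that $\int c_{1}(\Line)^{2}$ and $\int c_{1}(\Line)c_{1}(X)$ agree across $X^{+} = \Proj^{2}$ with $\bigO_{kl}$ and $X^{-} = X_{l}$ with $\Line_{l}^{k}$.

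Next I would produce the three factorizations. Item (1) is the fibered boundary Dehn twist $\tau_{\partial F_{l,k}}$: since $F_{l,k}$ has $(kl)^{2}\geq 2$ boundary components, the discussion preceding Theorem \ref{Thm:BoundaryRelation} shows $\tau_{\partial F_{l,k}}$ is a product of one positive Dehn twist parallel to each of the $(kl)^{2}$ boundary components, each smoothly isotopic to an exact Lagrangian sphere (a circle). Items (2) and (3) come from Theorem \ref{Thm:BoundaryRelation} applied to the Lefschetz fibrations $\pi_{l,k}$ (on $X_{l}\setminus Z_{0}$) and the degree-$kl$ pencil on $\Proj^{2}$ respectively: the number of Dehn twists is $\#\Crit\pi = (-1)^{\dim_{\C}X}(\chi(X) - 2\chi(Z_{1}) + \chi(B))$ from Equation \eqref{Eq:TwistCount}. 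For $\Proj^{2}$ with $\bigO_{kl}$ this gives $\chi(\Proj^{2}) = 3$, $\chi(B) = (kl)^{2}$, $\chi(Z_{1}) = 2 - 2g_{l,k}$, and a short simplification yields $3((kl)^{2} - 3(kl) + 1)$. For $X_{l}$ with $\Line_{l}^{k}$ one uses $\chi(X_{l}) = 2\chi(\Sigma_{l}) = 2l(3-l)$ (it is a $\ProjOne$-bundle over $\Sigma_{l}$) together with the same $\chi(Z_{1})$ and $\chi(B)$ to get $3(kl)^{2} - 6kl - 2l^{2} + 6l$; I would double-check the arithmetic against the general formula $\#\Crit\pi_{l,k} = 2((1-k)l(3-l) - l^{2}) + 3(kl)^{2}$ derived in \S\ref{Sec:GeneralizedLantern}. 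The equality of the three resulting symplectomorphisms in $\pi_{0}\Symp^{c}(F_{l,k})$ is then immediate: all three equal $\tau_{\partial F_{l,k}}$ by Theorem \ref{Thm:BoundaryRelation}, since each arises as the monodromy of a Lefschetz pencil on a surface whose affine complement has ideal boundary with the standard $\Circle$-fibered structure matching the $\disk$-bundle neighborhood of $B$.

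Finally I would address the two ``coincidence'' claims at the end of the statement. For $k=1$: I would observe that when $k=1$, the pencil for $(X_{l}, \Line_{l})$ has exactly $d = l^{2} = (kl)^{2}$ critical points — matching item (1)'s count $(kl)^{2}$ — and that \S\ref{Sec:GeneralizedLantern} already records that each singular fiber of this pencil is a nodal union of $\Sigma$-with-$(d-1)$-points-removed and a fiber with one point removed, so the total monodromy is precisely the fibered boundary twist realized as one twist per boundary circle; hence the ``first two collections of curves coincide''. For $l=2, k=1$: here $X_{2}$ is the second Hirzebruch surface, identical to the $(X^{-}, \Line^{-})$ of the opening example in \S\ref{Sec:GeneralizedLantern}, and $(\Proj^{2}, \bigO_{2})$ is $(X^{+}, \Line^{+})$; plugging $l=2, k=1$ gives $g_{l,k}=0$ and $4$ boundary components, with item (2) yielding $4$ and item (3) yielding $3$ Dehn twists, which is exactly the right/left sides of the classical lantern as identified by Auroux--Smith \cite{AS:Pencil}.

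The main obstacle I anticipate is purely computational rather than conceptual: getting the Euler-characteristic bookkeeping exactly right, in particular handling the desingularization of the non-generic $a=0$ section of $\Line_{l}$ correctly so that the genus of the generic fiber $Z_{1}$ and the base-point count $\#B = [Z_{1}]^{2}$ come out as stated, and ensuring the $(X_{l}, \Line_{l}^{k})$ twist count $3(kl)^{2} - 6kl - 2l^{2} + 6l$ genuinely matches the general expression $2((1-k)l(3-l) - l^{2}) + 3(kl)^{2}$ after expansion. I would also want to be careful that the identification of the ideal boundary of $F_{l,k}$ (as a $\disk$-bundle neighborhood of $B$ minus its zero section) is genuinely the same on all three sides, so that Theorem \ref{Thm:BoundaryRelation} applies to a single honest Weinstein manifold — this is where the pencil-pair symplectomorphism $\phi$ and Lemma \ref{Lemma:SympHolonomy} do the real work, but verifying it for these specific surfaces is where one must be attentive.
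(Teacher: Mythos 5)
Your overall strategy is exactly the paper's: the theorem is proved there as a summary of the computations in \S\ref{Sec:GeneralizedLantern}, namely the pencil-pair verification via Theorem \ref{Thm:LowDimPencil} (and Theorem \ref{Thm:LowDimCabling} for the $k$-fold cables), the twist counts via Equation \eqref{Eq:TwistCount}, the boundary-parallel description of $\tau_{\partial F}$ in dimension two for item (1), and the identification of the $k=1$ and $l=2,k=1$ cases with the fibered twist and the lantern. Your items (1) and (2), the genus and base-point counts, and both coincidence claims are handled correctly and in the same way.

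However, your verification of item (3) contains a concrete arithmetic failure. With $\chi(\Proj^{2})=3$, $\chi(Z_{1})=2-2g_{l,k}=3kl-(kl)^{2}$ and $\chi(B)=(kl)^{2}$, Equation \eqref{Eq:TwistCount} gives $3-2\bigl(3kl-(kl)^{2}\bigr)+(kl)^{2}=3\bigl((kl)^{2}-2kl+1\bigr)=3(kl-1)^{2}$, \emph{not} $3\bigl((kl)^{2}-3kl+1\bigr)$ as you assert; the two differ by $3kl$. The sanity checks you yourself propose expose this: at $kl=2$ a pencil of conics has $3$ nodal members while the printed closed form evaluates to $-3$; at $kl=3$ a pencil of cubics has the classical $12$ nodal members, not $3$; and the difference test forced by Equation \eqref{Eq:ChiContact}, namely item (3) minus item (2) $=\chi(\Proj^{2})-\chi(X_{l})=2l^{2}-6l+3$, holds with $3(kl-1)^{2}$ but fails with $3\bigl((kl)^{2}-3kl+1\bigr)$. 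So the ``short simplification yields $3((kl)^{2}-3(kl)+1)$'' step is false as written -- the statement's item (3) as printed is itself off by $3kl$ (a typo in the source), and a correct proof must derive and use $3(kl-1)^{2}$ rather than reproduce the printed expression. Relatedly, the cross-check you defer to, matching item (2) against the inline formula $2\bigl((1-k)l(3-l)-l^{2}\bigr)+3(kl)^{2}$, would also fail for $k>1$: that displayed formula has its own slip (the correct general count is $2(1-k)\chi(\Sigma)-2kd+3k^{2}d$, i.e. the term inside should be $-kl^{2}$, not $-l^{2}$), whereas your direct computation of item (2) via \eqref{Eq:TwistCount}, giving $3(kl)^{2}-6kl-2l^{2}+6l$, is the correct one. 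In short: right method, but the arithmetic you claim for item (3) does not go through, and carrying out the checks you postponed is precisely what would have revealed this.
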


By factoring $mk$ in different ways, the above constructions can give many holomorphic Weinstein fillings $W_{\chi, m^{2}, k}, \Proj^{2} \setminus \Sigma_{mk}$ of the contact $3$-manifold given as the ideal boundary of the complement of a $\deg = mk$ curve $\Sigma_{mk} \subset \Proj^{2}$. For example, choose $N \in \Z_{\geq 2}$ and $m = 2^{i}, k = 2^{N-i}$ for $0 < i < N$. Then
\begin{equation*}
\chi(W_{\chi, 2^{2i}, 2^{N-i}}) = 2^{i+1}(3-2^{i}) - 2^{N + 1}, \quad \chi(\Proj^{2} \setminus \Sigma_{2^{N}}) = 3 + 2^{N}(2^{N} - 3).
\end{equation*}
These Euler characteristics are all distinct, implying the following theorem.

\begin{thm}
Let $\Mxi$ be the ideal contact boundary of a tubular neighborhood of a $\deg=2^{N}$ curve in $\Proj^{2}$ with $N \geq 2$. Then $\Mxi$ has at least $N$ distinct (holomorphic) Weinstein fillings.
\end{thm}

\section{Pencil pairs from projective models and high-degree hypersurfaces}\label{Sec:ProjectiveModel}

The existence of our $\dim_{\C} X^{\pm} = 3$ pencil pairs (determining mapping class relations on Weinstein $4$-manifolds) relies on identifications of divisors $Z^{\pm}_{1} \subset X^{\pm}$ with complete intersections in ``big'' projective varieties $\ModSpace$. In \S \ref{Sec:ProjectiveModelDetails} we describe this technique. In \S \ref{Sec:Cabling} we show that a single pencil pair found using this technique determines an infinite family of pencil pairs, generalizing Lemma \ref{Lemma:LowDimCabling} to all dimensions.

\subsection{Projective models}\label{Sec:ProjectiveModelDetails}

\begin{defn}\label{Def:ProjectiveModel}
Let $(Z, \Line)$ be a polarized variety of $\dim_{\C}Z = n$, $\ModSpace \subset \Proj^{N}$ be a projective variety of $\dim_{\C}\ModSpace = m > n$, $\vec{d} = (d_{1}, \dots, d_{m-n}) \in \Z^{m-n}_{>0}$, and write $\bigO_{\vec{d}} = \oplus_{j=1}^{m-n} \bigO_{d_{j}}$. We say that $(\ModSpace, \vec{d})$ is a \emph{projective model} for $(Z, \Line)$ if there is a holomorphic section of $\multisec_{\ModSpace}$ of $\bigO_{\vec{d}} \to \Proj^{N}$ and a biholomorphism
\begin{equation*}
\phi: Z \rightarrow Z_{\ModSpace}, \quad Z_{\ModSpace} = \{\multisec_{\ModSpace}=0\}\cap \ModSpace
\end{equation*}
for which $\phi^{\ast}\bigO_{d_{1}} = \Line$.
\end{defn}

\begin{defn}\label{Def:CommonProjectiveModel}
Let $(X^{\pm}, \Line^{\pm})$ be a pair of very ample polarized varieties. Suppose we have holomorphic sections $\multisec^{\pm}_{1} \in H^{0}(X^{\pm}, \Line^{\pm})$ for which
\be
\item the zero loci $Z_{1}^{\pm} = \{ \multisec^{\pm}_{1} = 0\} \subset X^{\pm}$ are transversely cut out, and
\item the polarized varieties $(Z_{1}^{\pm}, \Line^{\pm})$ have a common projective model $(\ModSpace, \vec{d})$.
\ee
Then we say that the $(X^{\pm}, \Line^{\pm})$ \emph{have divisors with a common projective model}.
\end{defn}

\begin{lemma}\label{Lemma:ProjectiveModel}
Suppose that the $(X^{\pm}, \Line^{\pm})$ have divisors with a common projective model. Then there are pencils for the $\Line^{\pm} \rightarrow X^{\pm}$ and a $\phi$ for which $(X^{\pm}, \Line^{\pm}, \phi)$ is a pencil pair.
\end{lemma}

\begin{proof}
By the definition of projective models there are sections $\multisec^{\pm}_{\ModSpace, 1}$ of $\bigO_{\vec{d}}$ with zero loci $Z_{\ModSpace}^{\pm} \subset \ModSpace$ and biholomorphisms $\phi^{\pm}: Z^{\pm}_{1} \rightarrow Z_{\ModSpace}^{\pm}$ for which $\Line^{\pm}|_{Z_{1}^{\pm}} = \phi^{\ast}\bigO_{d_{1}}$. Choose  sections $\multisec^{\pm}_{0}$ of the $\Line^{\pm}$ so that the $(\multisec^{\pm}_{0}, \multisec^{\pm}_{1})$ are pencils for the $(X^{\pm}, \Line^{\pm})$ with base loci
\begin{equation*}
B^{\pm}=\{ \multisec^{\pm}_{0}=\multisec^{\pm}_{1}=0\} \subset Z^{\pm}_{1}.
\end{equation*}
By the existence of the projective models, there are holomorphic sections $\multisec_{\ModSpace, 0}^{\pm}$ of $\bigO_{d_{1}} \to \Proj^{N}$ such that
\begin{equation*}
\phi^{\pm}B^{\pm}=\{\multisec^{\pm}_{\ModSpace, 1} \oplus \multisec^{\pm}_{\ModSpace,0}=0\} \cap \ModSpace.
\end{equation*}

Let $\multisec^{T}_{\ModSpace, 1} \oplus \multisec^{T}_{\ModSpace,0}$ be a $T \in [-1, 1]$ parameter family of holomorphic sections of $\bigO_{\vec{d}} \oplus \bigO_{d_{1}} \rightarrow \Proj^{N}$ such that $\multisec^{\pm 1}_{\ModSpace, j} = \multisec_{\ModSpace, j}^{\pm}$ for $j=0,1$ and such that the zero loci of the $\multisec^{T}_{\ModSpace, 1}$ and $\multisec^{T}_{\ModSpace}\oplus \multisec^{T}_{\ModSpace, 0}$ are transversely cut out for all $T$. The transversality conditions can be satisfied using the fact that failure of transversality is a $\codim_{\C} \geq 1$ condition in the projective spaces of sections $\Proj(H^{0}(\ModSpace, \bigO_{\vec{d}}\oplus\bigO_{d_{1}}))$. A parametric Darboux-Moser-Weinstein argument as in the proof of Lemma \ref{Lemma:SympHolonomy} gives us symplectomorphism
\begin{equation*}
\psi: (\phi^{+}Z^{+}_{1}, \omega_{\bigO_{d_{1}}}) \rightarrow (\phi^{-}Z^{-}_{1}, \omega_{\bigO_{d_{1}}}), \quad \psi \phi^{+}B^{+} = \phi^{-1}B^{-}.
\end{equation*}
So $\phi=(\phi^{-})^{-1}\psi\phi^{+}$ a symplectomorphism between the $(X^{\pm}, \omega_{\Line^{\pm}})$ preserving the base loci $B^{\pm}$.
\end{proof}

\subsection{Cabling pencil pairs}\label{Sec:Cabling}

Theorem \ref{Theorem:Cabling} below allows us to determine an infinite family of pencil pairs from a single pencil pair, when the original pencil pair is given by Lemma \ref{Lemma:ProjectiveModel}. The results can be seen as a high-dimensional generalization of Lemma \ref{Lemma:LowDimCabling}. It's proof depends on the following general lemma. We suspect that this result is known to experts but were unable to find a proof in the literature.

\begin{thm}\label{Theorem:BranchedCover}
Let $Z \subset X$ be a divisor of be a very ample line bundle $\Line$ over a closed variety $X$. View $Z$ and $X$ as symplectic manifolds with their symplectic structures inherited from $(\Proj^{N}, \omega_{\Proj^{N}}), N=\dim H^{0}(X, \Line)-1$ via the embedding of Equation \eqref{Eq:LineBundleProjectiveEmbedding}. For $k \geq 0$, let $Z_{k}$ be a divisor of $\Line^{\otimes k} = \bigO_{k}|_{X}$ intersecting $Z$ transversely. Then as a symplectic manifold, $Z_{k} \subset X \subset \Proj^{N}$ is a $k$-fold branched covering of $Z$ with branch locus $Z \cap Z_{k}$. In particular, the symplectomorphic type of the pairs $(Z_{k}, Z \cap Z_{k})$ and $(Z_{k}, B_{k})$, where $B_{k}$ is the base locus of a pencil on $X$ containing $Z_{k}$, depends only on $\Line|_{Z}$.
\end{thm}

\begin{proof}
We first explain what we'd like to do and why it wouldn't work. Assume that $Z$ is cut out by some $\multisec_{0} \in H^{0}(X, \Line)$. Take $Z_{k}$ to be cut out by a generic section $\mathcal{C}^{1}$-close to $\multisec_{0}^{\otimes k}$, so that $Z_{k}$ is contained in a tubular neighborhood $N$ of $Z$ in $X$. Then we can map $Z_{k}$ onto $Z$ by restricting a projection $\pi_{N}: N \to Z$ to $Z_{k}$. If $\pi_{N}$ is holomorphic then $\pi_{N}|_{Z_{k}}$ will be holomorphic, and we can try to prove that $\pi_{N}|_{Z_{k}}$ is a branched covering. Unfortunately, this argument does not work in general as our subvariety $Z \subset X$ is not guaranteed to have a neighborhood $N$ with a holomorphic projection $\pi_{N}$, cf. \cite{MorrowRossi}. Instead we'll deform $Z_{k} \subset X$ to a variety $Z_{k}'$ inside the total space $\totalspace(\Line|_{Z})$ of the line bundle $\Line|_{Z} \to Z$ by embedding both $X$ and $\totalspace(\Line|_{Z})$ into a large projective space. Then $Z'_{k}$ and $\totalspace(\Line|_{Z})$ have holomorphic projections to $Z$.
	
Let $\vec{1} = [1:\cdots:1] \in \Proj^{N+1}$. Viewing $\Proj^{N} = \{z_{N+1} = 0\} \subset \Proj^{N+1}$, we can think of $Z$ and $X$ as being contained in $\Proj^{N+1}$. The map
\begin{equation*}
\pi_{\vec{1}}: (\Proj^{N+1} \setminus \{\vec{1}\}) \to \Proj^{N}, \quad [z_{0}: \cdots :z_{N}: z_{N+1}] \mapsto [z_{0} - z_{N+1}:\cdots : z_{N} - z_{N+1}]
\end{equation*}
is a holomorphic submersion using which we can identify $\Proj^{N+1} \setminus \{\vec{1}\}$ with the total space of the line bundle $\bigO_{1} \to \Proj^{N}$. The spaces $\pi_{\vec{1}}^{-1}Z$ and $\pi_{\vec{1}}^{-1}X$ are then smooth, quasi-projective varieties in $\Proj^{N+1}$. They are projective cones over $Z$ and $X$, with a point $\vec{1}$ removed from each and can be smoothly compactified inside of a blow up at $\Proj^{N+1}$ at the point $\vec{1}$.

Without loss of generality, we assume that $Z \subset X$ is the zero locus of $z_{0}$ in $X$. Indeed, a basis $(\multisec_{j})$ of $H^{0}(X, \Line)$ used to define the embedding of Equation \eqref{Eq:LineBundleProjectiveEmbedding} can be chosen so that $\multisec_{0}$ is the section cutting out $Z$. Then we can express $Z$ as $\{ z_{N+1} = 0\}$ inside of $\pi_{\vec{1}}^{-1}Z$ or as $\{ z_{0} = z_{N+1}=0\}$ inside of $\pi_{\vec{1}}^{-1}X$.

For $\multisec_{k}$ a $\deg=k$ homogeneous polynomial in variables $z_{1},\cdots, z_{N}$ define
\begin{equation*}
\sigma_{N} = z_{0}^{k} + \multisec_{k} \in H^{0}(\Proj^{N}, \bigO_{k}).
\end{equation*}
We assume that $\sigma_{N}$ is transversely cut out when viewed as a section of $\bigO_{k}|_{X} = \Line^{\otimes k}$ and identify its zero locus with the subvariety of interest,
\begin{equation*}
Z_{k} = \{ \sigma_{N} = 0\} \cap X.
\end{equation*}
As a symplectic manifold we may identify $Z_{k}$ with the zero locus of any transversely cut out section of $\Line^{\otimes k} \to X$ by following the proof of Lemma \ref{Lemma:SympHolonomy}. We also assume that $Z \cap Z_{k}$ is transversely cut out by $z_{0} \oplus \sigma_{X} \in H^{0}(X, \Line \oplus \Line^{\otimes k})$. This means that $Z \cap Z_{k}$ is cut out by $\multisec_{k}$ viewed as an element of $H^{0}(Z, \Line)$.

Define a section $\sigma_{N+1}$ of $\bigO_{k}$ and a subvariety $V_{k}$ of $\pi_{\vec{1}}^{-1}(X)$ as follows:
\begin{equation*}
\sigma_{N+1} = z_{0}^{k} + \multisec_{k} + z_{N+1}^{k} \in H^{0}(\Proj^{N+1}, \bigO_{k}), \quad V_{k} = \{ \sigma_{N+1} = 0\} \cap \pi_{\vec{1}}^{-1}(X).
\end{equation*}
Applying variations to $\multisec_{k}$, we can assume that $V_{k}$ is transversely cut out of $\pi_{\vec{1}}^{-1}(X)$. Take note of the intersections of $V_{k}$ with the $z_{0}=0$ and $z_{N+1}=0$ hyperplanes:
\begin{equation*}
\begin{gathered}
Z_{k} = V_{k} \cap \{ z_{N+1} =0\} \subset X, \quad Z_{k}' := V_{k} \cap \{ z_{0}=0\} \subset \pi_{\vec{1}}^{-1}(Z),\\
Z_{k} \cap Z = Z_{k}'\cap Z = \{ \multisec_{k} = 0\}\cap Z.
\end{gathered}
\end{equation*}
We claim that both $Z_{k}$ and $Z_{k}'$ are transversely cut out of $V_{k}$. To establish transversality for $Z_{k}$ observe that along $X$, $T\pi_{\vec{1}}^{-1}(X)$ splits as a direct sum of $TX$ and the tangent spaces to the fibers of $\pi_{\vec{1}}^{-1}$. Then $d\sigma_{N+1}|_{X} = d\sigma_{N}$ is non-vanishing along the $TX$ summand and $dz_{N+1}$ is non-vanishing along the complementary summand. Transversality for $Z_{k}'$ follows a similar argument by splitting the tangent space $T\pi_{\vec{1}}^{-1}(X)$ over $Z$ as a direct sum of $T\pi_{\vec{1}}^{-1}Z$ and a normal bundle to $Z$ in $X$. It follows that a generic $1$-parameter family of sections of $\Span(z_{0}, z_{1}) \subset H^{0}(\Proj^{N+1}, \bigO_{1})$ interpolating between $z_{0}$ and $z_{N+1}$ will cut out a $1$-parameter family of subvarieties of $V_{k}$ which interpolate between $Z_{k}$ and $Z_{k}'$. We assume of course that the zero-loci of the hyperplane sections in the $1$-parameter family miss the point $\vec{1} \in \Proj^{N+1}$. Through such a $1$-parameter family, every intersection with $Z$ yields $Z_{k}\cap Z$. Again by the logic of Lemma \ref{Lemma:SympHolonomy}, the pairs $(Z_{k}, Z_{k}\cap Z)$ and $(Z_{k}', Z_{k}'\cap Z)$ are symplectomorphic. To establish that $(Z_{k}, B_{k})$ and $(Z_{k}', B_{k}')$ are symplectomorphic (with $B_{k}$ and $B_{k}'$ being the base loci of pencils containing $Z_{k}$ and $Z_{k}'$, respectively), we can take another section of $\sigma_{N+1}'$ of $\bigO_{k}$ and keep track of the intersection of the zero locus of $\sigma_{N+1}'$ with our subvarieties in the $1$-parameter family.

To complete the proof we must established that $Z'_{k} \subset \pi_{\vec{1}}^{-1}(Z)$ is a branched covering of $Z$. This subvariety is cut out of by the section
\begin{equation*}
\sigma' = \multisec_{k} + z^{k}_{N+1} \in H^{0}(\pi_{\vec{1}}^{-1}(Z), \bigO_{k}).
\end{equation*}
Identify $\pi_{\vec{1}}^{-1}(Z)$ with the total space $\totalspace(\bigO_{1}) = \totalspace(\Line)$ over $Z$ with projection map $p = \pi_{\vec{1}}$, and $\bigO_{1}$ as $p^{\ast}(\Line|_{Z})$. With respect to this identification, $z_{N+1}$ is the tautological section $T$ defined by the property that $T(v_{z}) = v_{z}$ for $v_{z} \in p^{-1}(z)$. Similarly $\multisec_{k}$ is lifted from the base $Z$ of the projection. Therefore we can view $\sigma'$ as a section of $(p^{\ast}\bigO_{1})^{\otimes k} \to \totalspace(\bigO_{1}|_{Z})$ expressible as $T^{\otimes k} + \multisec_{k}$. 

The remainder of the proof follows a standard holomorphic branched covering construction. See for example the proof of \cite[Proposition 4.1.6]{Lazerfeld:Positivity}. Consider the map
\begin{equation*}
\pow_{k}: \totalspace(\bigO_{1}|_{Z}) \to \totalspace(\bigO_{k}|_{Z}), \quad \pow_{k} v_{z} = v_{z}^{\otimes k}, \quad z \in p^{-1}(z), z\in Z.
\end{equation*}
This is a $k$-fold branched covering whose branch locus is the zero section. If $T_{k}$ is the tautological section of $p_{k}^{\ast}\bigO_{k}$ over $\totalspace(\bigO_{k}|_{Z})$ where $p_{k}:\totalspace(\bigO_{k}|_{Z})\to Z$ is the projection, then $\pow^{k}Z'_{k}$ is the zero locus of $\multisec_{k} + T_{k}$. This zero locus is exactly the graph of $-\multisec_{k}$ in $\totalspace(\bigO_{k}|_{Z})$. Hence the branch locus of $\pow_{k}|_{Z'_{k}}$ is exactly $\{ \multisec_{k} = 0\} \cap Z = Z'_{k} \cap Z$. Therefore $p_{k}\circ\pow_{k}: Z'_{k} \to Z$ is a holomorphic branched covering with branch locus $Z_{k}' \cap Z = Z_{k} \cap Z$ as claimed.
\end{proof}

\begin{thm}\label{Theorem:Cabling}
Suppose that $(X^{\pm}, \Line^{\pm})$ have divisors with a common projective model. Then for all $k \in \Z_{\geq 1}$, $(X^{\pm}, (\Line^{\pm})^{\otimes k})$ have divisors with a common projective model, yielding pencil pairs $(X^{\pm}, (\Line^{\pm})^{\otimes k}, \phi_{k})$ by Lemma \ref{Lemma:ProjectiveModel}.
\end{thm}

\begin{proof}
This follows immediately from Theorem \ref{Theorem:BranchedCover} and the definitions involved. The projective models for polarized divisors $(Z^{\pm}_{1}, \Line^{\pm})$ are deformation equivalent through complete intersection subvarieties of some $\ModSpace \subset \Proj^{N}$ for which $\Line^{\pm}$ are pull-backs of the line bundle $\bigO_{d_{1}} \to \Proj^{N}$. Theorem \ref{Theorem:BranchedCover} then provides that for all $k$, pairs of divisors and branch-loci for $(X^{\pm}, (\Line^{\pm})^{\otimes k})$ will be symplectomorphic by some $\phi_{k}$. Such $\phi_{k}$ define the desired pencil pairs.
\end{proof}

\section{$\dim_{\C}X^{\pm}=3$ examples}\label{Sec:Dim3}

Now we work out some new examples of positive mapping class relations coming from $\dim_{\C}X^{\pm}=3$ pencil pairs by applying the results of \S \ref{Sec:ProjectiveModel}. First, we briefly review some vocabulary and known results.

The \emph{anticanonical bundle} of a variety $X$ is defined $A_{X} = \wedge_{\C}^{\dim_{\C}X}TX$ for which $c_{1}(A_{X}) = c_{1}(X)$. If $A_{X}$ is ample, $X$ is \emph{Fano} and if $A_{X}$ is holomorphically trivializable, then $X$ is \emph{Calabi-Yau}. If $X$ is Fano,
\be
\item $\ind_{X}$ is the maximal $k$ for which there is an ample line bundle $\Line \to X$ with $A_{X}=\Line^{\otimes k}$, and
\item a generic section of $A_{X}$ cuts out an \emph{anticanonical divisor} $Z$ (which is Calabi-Yau).
\ee
A simply connected $\dim_{\C}=2$ Calabi-Yau variety is a \emph{$K3$ surface}. According to Kodaira, the diffeomorphism type of a $K3$ surface is uniquely determined. See \cite{K3Lectures} for this and additional background. For an ample line bundle $\Line$ over a $K3$ surface $Z$, the \emph{genus} is defined as
\begin{equation*}
g(Z, \Line) = 1 + \half \int_{Z}c_{1}(\Line)^{2},
\end{equation*}
which computes the genus of a divisor in $Z$ cut out by a generic holomorphic section of $\Line$. It follows from a Hodge theory computation that $\chi(Z) = 24$. The adjunction formula applied to anticanonical divisors for a $\dim_{\C}X=3$ Fano variety then gives
\begin{equation}\label{Eq:FanoC1C2}
\int_{X}c_{1}(X)c_{2}(X) = 24
\end{equation}
as well. Therefore the only interesting Chern numbers of a Fano $3$-fold $X$ are
\begin{equation*}
\int_{X} c_{1}(X)^{3} = \int_{X}c_{1}(A_{X})^{3}, \quad \int_{X}c_{2}(X)=\chi(X).
\end{equation*}
Combining these observations with some arithmetic of Chern classes, we obtain the following lemma.

\begin{lemma}\label{Lemma:FanoCritCount}
If $(\multisec_{0}, \multisec_{1})$ is a pencil associated to a line bundle $\Line$ over a variety $X$ of $\dim_{\C}X=3$, then the associated Lefschetz fibration on $X \setminus \{ \multisec_{0} = 0\}$ has
\begin{equation*}
\#\Crit \pi = - \chi(X) + \int_{X}2c_{2}(X)c_{1}(\Line) - 3c_{1}(X)c_{1}(\Line)^{2} + 4c_{1}(\Line)^{3}
\end{equation*}
critical points. In the specific case that $\Line=A_{X}^{\otimes k}$ over a Fano variety $X$,
\begin{equation*}
\#\Crit \pi = - \chi(X) + 48k + k^{2}(4k-3)\int_{X}c_{1}(A_{X})^{3}.
\end{equation*}
\end{lemma}

\begin{proof}
To count $\# \Crit \pi$ we seek to apply the formula from Theorem \ref{Thm:BoundaryRelation}, which requires computations of $\chi(Z_{1})$ and $\chi(B)$ where $Z_{1}$ is a divisor of $X$ cut out by some generic section $\multisec_{1}$ of $\Line \to X$ and $B$ is the base locus of a pencil. Throughout we apply axiomatic properties of Chern classes, cf. \cite{MS:CC}.

Since the $\Line$-valued $1$-form $d\multisec_{1}$ is non-degenerate along $Z_{1}$, the coimage of the map $TX|_{Z_{1}} \to \Line|_{Z_{1}}$, $v_{z} \mapsto d\multisec_{1}(v_{z})$ is a normal bundle to $Z_{1} \subset X$. Thus we have an identification $TX|_{Z_{1}} \simeq TZ_{1} \oplus \Line|_{Z_{1}}$ which when combined with the multiplicativity of total Chern classes yields the adjunction formula
\begin{equation*}
c(TX|_{Z_{1}}) = (1 + c_{1}(\Line))c(Z_{1}).
\end{equation*}
Expansion of the formula at $c_{2}(TX|_{Z_{1}})$ yields the second equality in the following computation:
\begin{equation*}
\begin{aligned}
\chi(Z_{1}) &= \int_{Z_{1}}c_{2}(Z_{1}) = \int_{Z}c_{2}(TX|_{Z_{1}}) - c_{1}(\Line)c_{1}(X) + c_{1}(\Line)^{2}\\
&= \int_{X}c_{1}(\Line)\left( c_{2}(X) - c_{1}(\Line)c_{1}(X) + c_{1}(\Line)^{2} \right).
\end{aligned}
\end{equation*}
The first equality follows from the fact that $c_{2}(Z_{1})$ is the Euler class of $Z_{1}$. The third equality follows from $[Z_{1}] = \PD_{X}c_{1}(\Line)$. Similar arguments provide us with
\begin{equation*}
c(TX|_{B}) = (1 + c_{1}(\Line))^{2}c(TX|_{B}) \implies \chi(B) = \int_{X}c_{1}(\Line)^{2}(c_{1}(X) - 2c_{1}(\Line)).
\end{equation*}

Finally we apply $-\Crit \pi = \chi(X) - 2\chi(Z_{1}) + \chi(B)$ from Theorem \ref{Thm:BoundaryRelation} and add up the computed terms to obtain the general formula. In the Fano case with $\Line = A_{X}^{\otimes k}$, Equation \eqref{Eq:FanoC1C2} yields
\begin{equation*}
\int_{X} 2c_{2}(X)c_{1}(\Line) = \int_{X} 2c_{2}(X)c_{1}(A_{X}^{\otimes k}) = \int_{X} 2kc_{2}(X)c_{1}(A_{X}) =  \int_{X} 2kc_{2}(X)c_{1}(X) = 48k.
\end{equation*}
Substituting $c_{1}(\Line) = kc_{1}(A_{X})$ into the remaining terms of the general formula then gives the second formula in the statement of the lemma.
\end{proof}

\subsection{Classification of polarized $K3$ surfaces of low genus}\label{Sec:K3Classification}

Let $\Line \to X$ be very ample line bundle over a $\dim_{\C}X=3$ Fano variety  with $\Line^{\otimes \ind_{X}}=A_{X}$. This implies that $\Line|_{Z}$ is very ample as well for $Z \subset X$ an anticanonical divisor. The following theorem reviews classification results of such polarized $K3$ surfaces $(Z, \Line)$ by Saint-Donat \cite{SD:ProjectiveModel} (for $g(Z, \Line) \leq 5$) and Mukai \cite{Mukai:Gleg10} (for $6 \leq g(Z, \Line) \leq 10$).

\begin{thm}\label{Thm:K3Classification}
In the above notation, suppose that $g=g(Z, \Line) \leq 10$ where it may be computed
\begin{equation*}
g = 1+ \half \int_{Z}c_{1}(\Line)^{2}= 1 + \half \ind_{X}\int_{X} c_{1}(\Line)^{3} = 1 + \half \ind_{X}^{-2}\int_{X} c_{1}(A_{X})^{3}.
\end{equation*}
Then $(Z, \Line)$ has a projective model $(\ModSpace_{g} \subset \Proj^{N_{g}}, \vec{d})$ which is entirely determined by $g$.
\end{thm}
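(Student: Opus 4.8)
The plan is to deduce the statement from the classification of polarized $K3$ surfaces of small genus due to Saint-Donat \cite{SD:ProjectiveModel} (for $g\le 5$) and Mukai \cite{Mukai:Gleg10} (for $6\le g\le 10$); the only real point is to check that $(Z,\Line|_{Z})$ is ``general enough'' for those theorems to give a model depending only on $g$. First I would record the routine facts: adjunction along the anticanonical divisor $Z\subset X$ gives $K_{Z}=(K_{X}+Z)|_{Z}=\bigO_{Z}$, the Lefschetz hyperplane theorem applied to $\phi_{\Line}$ gives $\pi_{1}(Z)=\pi_{1}(X)=0$, so $Z$ is a $K3$ surface; $\Line|_{Z}$ is very ample as noted in the text, so $g=g(Z,\Line|_{Z})\ge 3$, and the genus formula is the displayed Chern-number computation. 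The content of the theorem is that, for each such $g$, the embedding $\phi_{\Line|_{Z}}\colon Z\hookrightarrow\Proj^{g}$ realizes $Z$ as a fixed variety $\ModSpace_{g}$ cut by equations of a fixed type $\deg_{g}$, with $\Line|_{Z}$ the restriction of the hyperplane class: explicitly a quartic in $\Proj^{3}$ for $g=3$, a $(2,3)$ complete intersection in $\Proj^{4}$ for $g=4$, a $(2,2,2)$ complete intersection in $\Proj^{5}$ for $g=5$, and for $6\le g\le 10$ a linear section --- intersected also with a quadric when $g=6$ --- of one of the homogeneous varieties $G(2,5)\subset\Proj^{9}$, the $10$-dimensional spinor variety $OG(5,10)\subset\Proj^{15}$, $G(2,6)\subset\Proj^{14}$, the Lagrangian Grassmannian $LG(3,6)\subset\Proj^{13}$, or the adjoint variety $G_{2}/P\subset\Proj^{13}$.

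The real step is to certify that $(Z,\Line|_{Z})$ is not one of the exceptional (hyperelliptic, trigonal, plane-quintic, \dots) polarizations excluded from the clean statements. Here I would use that $\phi_{\Line|_{Z}}(Z)$ is a hyperplane section of the embedded Fano threefold $\phi_{\Line}(X)$. When $\ind_{X}\ge 2$ the pair $(X,\Line)$ appears on Iskovskikh's list \cite{Iskovskih:I} of Fano threefolds of index $\ge 2$ --- $X$ is $\Proj^{3}$, a quadric, or a del Pezzo threefold of low degree --- and in each case $\phi_{\Line}$ embeds $X$, hence $Z$, in $\Proj^{g}$ as the stated complete intersection. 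When $\ind_{X}=1$ and $g\le 10$, Mukai's classification of prime Fano threefolds of genus $g$ \cite{Mukai:Gleg10} (see also \cite{Iskovskih:II, MoriMukai}) exhibits $X$ itself as the corresponding (quadric-)linear section of $\ModSpace_{g}$, so the same holds for $Z$. Equivalently one may invoke Mukai's intrinsic characterization: a polarized $K3$ surface of genus $g\le 10$ occurs as an anticanonical section of a Fano threefold precisely when it is Brill--Noether general, and the Brill--Noether general polarizations are exactly the ones carrying the homogeneous model. As a sanity check for the ``only if'' direction, a special polarization would equip $Z$ with an auxiliary divisor class (from a degree-$2$ map, a low-degree elliptic pencil with a section, a rational scroll through the image, etc.); by the Lefschetz $(1,1)$ theorem and surjectivity of $\mathrm{Pic}(X)\to\mathrm{Pic}(Z)$ for $X$ a Fano threefold it would extend to $X$ and contradict the indecomposability of $A_{X}$ in the relevant cases.

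I expect the Brill--Noether genericity step to be the only genuine obstacle: the Saint-Donat and Mukai statements acquire real exceptional strata once $g\ge 5$, and one must be sure an anticanonical $K3$ section of a Fano threefold never lands in them. I would dispatch this by quoting Mukai's characterization together with the Fano threefold classification rather than by inspecting each Iskovskikh--Mori--Mukai family by hand, keeping the $\mathrm{Pic}$-extension argument only as a fallback. Everything else --- the adjunction and genus computations, and reading $(\ModSpace_{g},\deg_{g})$ off the tables of \cite{SD:ProjectiveModel, Mukai:Gleg10} with $\Line|_{Z}$ matched to the polarization-defining class --- is routine.
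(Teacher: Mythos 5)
The paper offers no proof of this statement beyond citation: it is presented as a review of the classifications of Saint-Donat \cite{SD:ProjectiveModel} ($g\le 5$) and Mukai \cite{Mukai:Gleg10} ($6\le g\le 10$), with very ampleness of $\Line$ the only hypothesis recorded beyond the standing setup. So the crux you isolate -- that those classifications produce the model $(\ModSpace_{g},\deg_{g})$ only away from the exceptional strata (hyperelliptic/trigonal/unigonal for $g\le 5$, Brill--Noether special for $6\le g\le 10$) -- is exactly the right place to worry, and it is not addressed in the paper at all.

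However, neither of your proposed ways of closing that gap works. The reduction ``$X$ itself is a (quadric-)linear section of $\ModSpace_{g}$'' covers only the index $\ge 2$ cases and the \emph{prime} (Picard rank one) index-one Fano threefolds of Iskovskikh--Mukai; the statement, and the families the paper applies it to in \S \ref{Sec:Dim3}, include Picard rank $\ge 2$ members (divisors in products of projective spaces, double covers, blow-ups) which are not sections of $\ModSpace_{g}$, so nothing is proved in precisely the cases where it is needed. The asserted characterization ``anticanonical $K3$ sections of Fano threefolds are exactly the Brill--Noether general polarized $K3$s'' is not a theorem, and in fact fails for those members: for $X\subset\ProjOne\times\Proj^{3}$ of bidegree $(1,3)$ (the $g=6$ member 2-4), every smooth anticanonical $Z$ is a $(1,1),(1,3)$ complete intersection, hence carries the elliptic pencil $E=\mathrm{pr}_{1}^{\ast}\bigO(1)|_{Z}$ with $E\cdot c_{1}(\Line)=3$; so every smooth curve in $|\Line|_{Z}|$ is trigonal, the polarization lies in the excluded stratum, and by the very results of \cite{SD:ProjectiveModel, Mukai:Gleg10} being invoked $(Z,\Line)$ is \emph{not} a complete intersection of type $(1,1,1,2)$ in $G(2,5)$ (whose curve sections are Clifford general). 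The same happens at $g=7$ for the $(2,2)$ divisor in $\Proj^{2}\times\Proj^{2}$, where $\Line|_{Z}=A+B$ with $h^{0}(A)h^{0}(B)=9>8=h^{0}(\Line|_{Z})$, violating Mukai's Brill--Noether generality criterion. Your Picard-group fallback is also oriented the wrong way: the restriction $\mathrm{Pic}(X)\to\mathrm{Pic}(Z)$ is injective (an isomorphism only for very general $Z$), not surjective, and for Picard rank $\ge 2$ the decomposability of $A_{X}$ \emph{produces} the offending classes rather than excluding them. The upshot is that the genericity step you flagged is not merely the hard part of the proof -- it cannot be established under the stated hypotheses: very ampleness alone does not give the $\ModSpace_{g}$ model for $5\le g\le 10$, and a Brill--Noether-type hypothesis would have to be added to the statement (and then checked example by example) for the theorem and its applications to stand.
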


For $g=3$, $Z$ is a $\deg=4$ hypersurface in $\Proj^{3}$ with $\Line = \bigO_{1}$ so the model is $(\ModSpace_{3} = \Proj^{4} \subset \Proj^{4}, \vec{d}=(1,4))$. For $g=4$, $Z$ is an intersection of $\deg=2$ and $\deg=3$ hypersurfaces in $\Proj^{4}$ with $\Line = \bigO_{1} \rightarrow \Proj^{4}$, so the model is $(\ModSpace_{5} = \Proj^{5}, \vec{d}=(1,2,3))$. For $g = 5$, $Z$ is an intersection of three $\deg=2$ hypersurfaces in $\Proj^{5}$ with $\Line = \bigO_{1} \rightarrow \Proj^{5}$, so $(\ModSpace_{5} = \Proj^{5}, \vec{d}=(1,2,2,2))$. In the $6 \leq g \leq 10$ cases, the $\ModSpace_{g}$ have positive codimension in $\Proj^{N_{g}}$ and will be described in the following subsection.

\subsection{Examples of pencil pairs from anticanonical divisors on Fano $3$-folds}

Combining the above results with Theorems \ref{Lemma:ProjectiveModel} and \ref{Theorem:Cabling}, we have the following theorem.

\begin{thm}\label{Thm:K3PencilPair}
Let $(X^{\pm}, \Line^{\pm})$ be polarized Fano $3$-folds for which $A_{X^{\pm}} = (\Line^{\pm})^{\ind_{X^{\pm}}}$ and $\Line^{\pm}$ are very ample. Suppose that their anticanonical divisors $Z^{\pm} \subset X^{\pm}$ are such that $g(Z^{\pm}, \Line^{\pm}) \leq 10$, or equivalently
\begin{equation*}
\int_{X^{\pm}} A_{X^{\pm}}^{3} \leq 18\ind_{X^{\pm}}^{2}
\end{equation*}
Then for every $k \in \Z_{>0}$ there are $\phi_{k}$ for which $(X^{\pm}, A_{X^{\pm}}^{\otimes k}, \phi_{k})$ are pencil pairs.
\end{thm}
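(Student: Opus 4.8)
The plan is to reduce the statement to an application of Lemma~\ref{Lemma:ProjectiveModel} together with Lemma~\ref{Lemma:Cabling}, with the bulk of the work being a verification that the hypotheses of the former are met. First I would note that the displayed inequality $\int_{X^{\pm}} A_{X^{\pm}}^{3} \leq 18\,\ind_{X^{\pm}}^{2}$ is, by the genus formula $g = 1 + \tfrac{1}{2}\ind_{X}^{-2}\int_{X}A_{X}^{3}$ recorded just before Theorem~\ref{Thm:K3Classification}, literally equivalent to $g(Z^{\pm}, \Line^{\pm}) \leq 10$; so the two stated hypotheses are the same, and I only need to extract a common projective model for the two polarized $K_{3}$ surfaces $(Z^{+}, \Line^{+})$ and $(Z^{-}, \Line^{-})$.

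The key steps, in order, are as follows. (1) Since $\Line^{\pm}$ is very ample on the Fano $X^{\pm}$, its restriction to any anticanonical divisor is very ample; in particular $\Line^{\pm}|_{Z^{\pm}}$ is very ample on the $K_{3}$ surface $Z^{\pm}$. (2) A generic anticanonical section cuts out a smooth $Z^{\pm}$ (Bertini), which is Calabi--Yau of $\dim_{\C}=2$ and simply connected, hence a $K_{3}$ surface, and by construction $Z^{\pm} \subset X^{\pm}$ is transversely cut out, giving hypothesis (1) of Lemma~\ref{Lemma:ProjectiveModel} with $\multisec_{1}^{\pm}$ the chosen anticanonical section. (3) Because $g(Z^{+}, \Line^{+}) = g(Z^{-}, \Line^{-})$ (both are $\leq 10$, but the point is that we can \emph{choose} $X^{\pm}$ so these genera agree — this is the content of how the theorem is applied, and in the abstract statement we simply assume the two sides share the relevant numerics via the displayed constraint lying in the $\leq 10$ range where Theorem~\ref{Thm:K3Classification} applies), Theorem~\ref{Thm:K3Classification} supplies a projective model $(\ModSpace_{g} \subset \Proj^{N_{g}}, \deg_{g})$ depending only on $g$; this is then a \emph{common} projective model for $(Z^{+}, \Line^{+})$ and $(Z^{-}, \Line^{-})$, verifying hypothesis (2) of Lemma~\ref{Lemma:ProjectiveModel}. (4) Lemma~\ref{Lemma:ProjectiveModel} now produces pencils for $\Line^{\pm} \rightarrow X^{\pm}$ and a symplectomorphism $\phi$ making $(X^{\pm}, \Line^{\pm}, \phi)$ a pencil pair determined by the projective model $(\ModSpace_{g}, \deg_{g})$. (5) Finally, since $A_{X^{\pm}} = (\Line^{\pm})^{\otimes \ind_{X^{\pm}}}$, the bundles $A_{X^{\pm}}^{\otimes k}$ are powers $(\Line^{\pm})^{\otimes k\,\ind_{X^{\pm}}}$ of the $\Line^{\pm}$, so Lemma~\ref{Lemma:Cabling} (iterated, or applied with the appropriate exponent on each side) produces, for every $k \in \Z_{>0}$, a $\phi_{k}$ for which $(X^{\pm}, A_{X^{\pm}}^{\otimes k}, \phi_{k})$ is a pencil pair determined by the projective model $(\ModSpace_{g}, \deg_{g,k})$.

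The main obstacle I expect is step (3): the precise bookkeeping in Theorem~\ref{Thm:K3Classification} that the projective model depends \emph{only} on $g$, so that two a priori unrelated Fano $3$-folds whose anticanonical $K_{3}$ sections share the same genus really do land in the same ambient $\ModSpace_{g}$ with the same degree vector $\deg_{g}$. One must also be slightly careful that the very-ampleness of $\Line^{\pm}$ on $X^{\pm}$ is genuinely needed (ampleness alone is insufficient for the Mukai-type models, as the excerpt warns), and that the restriction to the anticanonical divisor preserves very-ampleness — this is where hypothesis ``$\Line^{\pm}$ very ample'' in the statement is used. A secondary (minor) point is ensuring the exponent mismatch in step (5) when $\ind_{X^{+}} \neq \ind_{X^{-}}$ is handled by applying Lemma~\ref{Lemma:Cabling} with different powers on the two sides while keeping the $\deg_{1}$-component of the model in sync; this is routine given that Lemma~\ref{Lemma:Cabling} modifies only the first entry of $\deg$.
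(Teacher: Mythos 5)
Your overall skeleton -- Theorem \ref{Thm:K3Classification} to produce a common projective model, Lemma \ref{Lemma:ProjectiveModel} to produce a first pencil pair, Lemma \ref{Lemma:Cabling} to reach all $k$ -- is exactly the paper's (one-line) proof, and in the index-one case, where $\Line^{\pm}=A_{X^{\pm}}$, your argument coincides with it. The gap is in how you wire steps (4)--(5) when $\ind_{X^{\pm}}>1$, which the theorem permits and which occurs in the paper's $\ind_{X}=2$ family. Your sections $\multisec^{\pm}_{1}$ are anticanonical, i.e.\ sections of $A_{X^{\pm}}=(\Line^{\pm})^{\otimes \ind_{X^{\pm}}}$ and not of $\Line^{\pm}$, so the only consistent way to feed them into Lemma \ref{Lemma:ProjectiveModel} is with ambient polarization $A_{X^{\pm}}$; the output is then the $k=1$ pencil pair $(X^{\pm},A_{X^{\pm}},\phi_{1})$ directly, and Lemma \ref{Lemma:Cabling} is applied with the \emph{same} exponent $k$ on both sides to finish. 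Your step (4) instead asserts a pencil pair $(X^{\pm},\Line^{\pm},\phi)$, which does not follow from the hypotheses: the zero loci of sections of $\Line^{\pm}$ are not the $K_{3}$ divisors $Z^{\pm}$ but (for $\ind_{X^{\pm}}=2$) del Pezzo surfaces, about which Theorem \ref{Thm:K3Classification} says nothing; a common model for those requires a separate classification input such as Fujita's, which is precisely the independent construction of \S\ref{Sec:ObaRevisited}, not a consequence of this theorem's hypotheses. So the intermediate $\Line^{\pm}$-level pencil pair is unjustified (and unnecessary), and the cabling by $k\,\ind_{X^{\pm}}$ in step (5) inherits the gap.

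Two further points. Your ``secondary point'' about $\ind_{X^{+}}\neq\ind_{X^{-}}$ is not a repair: Lemma \ref{Lemma:Cabling} cables one pencil pair by one exponent, and if the genera agree while the indices differ then $\int_{X^{+}}A_{X^{+}}^{3}=2(g-1)\ind_{X^{+}}^{2}\neq \int_{X^{-}}A_{X^{-}}^{3}$, so the anticanonical divisors have different symplectic volumes with respect to $A_{X^{\pm}}^{\otimes k}$ and no pencil pair for $A_{X^{\pm}}^{\otimes k}$ can exist at all. The theorem is implicitly applied to pairs drawn from a fixed family with equal genus and equal index, so the hedge in your step (3) should simply be promoted to a hypothesis (equal genus and index, hence equal $\int_{X^{\pm}}A_{X^{\pm}}^{3}$). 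Finally, when $\ind_{X^{\pm}}=2$ there is a genuine, if small, mismatch worth stating rather than glossing (the paper glosses it too): Lemma \ref{Lemma:ProjectiveModel} with ambient bundle $A_{X^{\pm}}$ literally asks for a common model of $(Z^{\pm},A_{X^{\pm}}|_{Z^{\pm}})$, while Theorem \ref{Thm:K3Classification} provides one for $(Z^{\pm},\Line^{\pm}|_{Z^{\pm}})$; the fix is to note that the model identifies $A_{X^{\pm}}|_{Z^{\pm}}$ with the square of the hyperplane-type bundle on $\ModSpace_{g}$, so the base locus of an anticanonical pencil is realized as the zero set of a section of that square and the Moser argument in the proof of Lemma \ref{Lemma:ProjectiveModel} goes through verbatim.
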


Indeed, under the hypothesis of Theorem \ref{Thm:K3PencilPair}, Theorem \ref{Thm:K3Classification} provides a projective model for an anticanonical divisor of each $X^{\pm}$ which are $K3$ surfaces. These projective models are entirely determined the genus, so the $(X^{\pm}, A_{X^{\pm}})$ have divisors with a common projective model (Definition \ref{Def:CommonProjectiveModel}). By Lemma \ref{Lemma:ProjectiveModel}, there is a symplectomorphism $\phi=\phi_{1}$ for which the $(X^{\pm}, A_{X^{\pm}}, \phi)$ form a pencil pair. By Theorem \ref{Theorem:Cabling}, there are $\phi_{k}$ for which the $(X^{\pm}, A_{X^{\pm}}^{\otimes k}, \phi_{k})$ are pencil pairs for all $k \in \Z_{>0}$.

To find all examples of pencil pairs determinable from Theorem \ref{Thm:K3PencilPair}, we cross reference the above theorems with the complete classification of $\dim_{\C}=3$ Fano varieties by Iskovskih \cite{Iskovskih:I, Iskovskih:II} and Mori-Mukai \cite{MoriMukai}. There are $105$ such families of varieties which are listed in the Fanography database \cite{Fanography} along with relevant data such as indices and Hodge diamonds (which of course determine Euler characteristics).\footnote{See \url{https://www.fanography.info/about} for a complete list of resources from which the database is compiled.}

We collect the relevant examples of our Fano $X$ into families (having at least two members) organized by their $\ind_{X}$ and $\int_{X}c_{1}(A_{X})^{3}$. Members of the families are labeled by their indices $a$-$b$ as in \cite{Fanography}, with $a$ being the Picard rank. For each, a short description of $X$ is given. According to Theorem \ref{Thm:K3PencilPair} any two members of a given family form a pencil pair.

\subsubsection{Case $\ind_{X}=1, \int_{X}c_{1}(A_{X})^{3}=10 \implies g(Z, A_{X})=6$}
\be
\item[(1-5)] $X$ is an intersection of $\ModSpace_{6}=G(2, 5) \subset \Proj^{9}$ with two hyperplanes and a quadric for which $\chi(X)=-16$.
\item[(2-4)] $X$ is a divisor of $\deg=(1, 3)$ on $\ProjOne \times \Proj^{3}$ -- that is, a blow up of $\Proj^{3}$ along an intersection of two cubic hypersurfaces -- having $\chi(X)=-14$.
\ee
Here is the projective model is $(\ModSpace_{6} \subset \Proj^{9}, \vec{d}=(1,1,1,2))$ following \cite{Mukai:Gleg10}. We obtain Weinstein $4$-manifolds $(F_{k}, \beta_{k})$ given by a generic $A_{X}^{\otimes k}$ divisor in either $X$, minus its intersection with another such generic divisor (that is the base locus of an associated pencil). The $F_{k}$ are smooth fibers of Lefschetz fibrations on $X$ minus a divisor for $A_{X}^{\otimes k}$ associated to the pencil for $A_{X}^{\otimes k}$. Since these divisors can be viewed as hyperplane sections in a projective variety, we can also view the $F_{k}$ as affine varieties.

Applying Theorem \ref{Thm:K3PencilPair}, we have factorizations of $\tau_{\partial F_{k}}$ into products of
\begin{equation*}
\chi + 48k + 10k^{2}(4k-3), \quad \chi=-16, -14
\end{equation*}
Dehn twists along Lagrangian spheres in $F_{k}$.

\subsubsection{Case $\ind_{X}=1, \int_{X}c_{1}(A_{X})^{3}=12 \implies g(Z, A_{X})=7$}
\be
\item[(1-6)] $X$ is an intersection of $\ModSpace_{7} \subset \Proj^{15}$ of $\dim_{\C}\ModSpace=10$ with four hyperplanes, with $\chi(X) = -10$ as described in \cite{Mukai:Gleg10}.
\item[(2-5)] $X$ is an intersection of $\deg=(0,3)$ and $\deg=(1,1)$ hypersurfaces in $\Proj^{1} \times \Proj^{4}$ (that is, a blow up of a cubic in $\Proj^{4}$ along an intersection of two hyperplane sections) having $\chi(X)=-6$.
\item[(2-6)] $X$ is a $\deg=(2,2)$ divisor in $\Proj^{2} \times \Proj^{2}$ having $\chi(X) = -12$.
\item[(3-1)] $X$ is a double branched cover of $(\ProjOne)^{3}$ over a $\deg=(2,2,2)$ divisor having $\chi(X)=-8$.
\ee
In this case the projective model of \cite{Mukai:Gleg10} is $(\ModSpace_{7} \subset \Proj^{15}, \vec{d}=(1,1,1,1,1))$.

Theorem \ref{Thm:K3PencilPair} gives us Weinstein $4$-manifolds $(F_{k}, \beta_{k})$ and factorizations of $\tau_{\partial F_{k}}$ into products of
\begin{equation*}
- \chi + 48k + 12k^{2}(4k-3), \quad \chi=-10,-6,-12,-8
\end{equation*}
Dehn twists along Lagrangian spheres in $F_{k}$.

\subsubsection{Case $\ind_{X}=1, \int_{X}c_{1}(A_{X})^{3}=14 \implies g(Z, A_{X})=8$}
\be
\item[(1-7)] $X$ is a the intersection of $\ModSpace_{8}=G(2, 6) \subset  \Proj^{14}$ with nine hyperplanes with $\chi(X)=-6$.
\item[(2-7)] $X$ is an intersection of $\deg=(0,2)$ and $\deg=(1,2)$ divisors on $\ProjOne \times \Proj^{4}$ with $\chi(X)=-4$.
\item[(2-8)] Let $Y$ be the $\Proj^{1}$-bundle given by projectivizing the $\C^{2}$-bundle $\C \oplus \bigO_{1} \rightarrow \Proj^{2}$. Then $X$ is a double branched cover of $Y$ whose branch locus is an anticanonical divisor in $Y$ with $\chi(X)=-12$.
\item[(3-2)] $X$ is a divisor in a $\Proj^{2}$ bundle over $\Proj^{1}\times \Proj^{1}$ having $\chi(X)=2$.
\ee

Theorem \ref{Thm:K3PencilPair} gives us Weinstein $4$-manifolds $(F_{k}, \beta_{k})$ and factorizations of $\tau_{\partial F_{k}}$ into products of
\begin{equation*}
- \chi + 48k + 14k^{2}(4k-3), \quad \chi=-6,-4,-12,2
\end{equation*}
Dehn twists along exact Lagrangian spheres.

\subsubsection{Case $\ind_{X}=1, \int_{X}c_{1}(A_{X})^{3}=16 \implies g(Z, A_{X})=9$}
\be
\item[(1-8)] $X$ is an intersection of a $\dim_{\C}=6$ spin Grassmann variety in $\Proj^{13}$ with three hyperplanes, having $\chi(X)=-2$.
\item[(2-9)] $X$ is an intersection of $\deg=(1,1)$ and $\deg=(1,2)$ divisors in $\Proj^{2} \times \Proj^{3}$ having $\chi(X)=-4$.
\item[(2-10)] $X$ is an intersection of $\deg=(0,2)$, $\deg=(0,2)$, and $\deg=(1,1)$ divisors in $\ProjOne \times \Proj^{5}$ with $\chi(X)=0$.
\ee
Again the first example corresponds to the $g=9$ model of \cite{Mukai:Gleg10}.

Theorem \ref{Thm:K3PencilPair} gives us Weinstein $4$-manifolds $(F_{k}, \beta_{k})$ and factorizations of $\tau_{\partial F_{k}}$ into products of
\begin{equation*}
- \chi + 48k + 16k^{2}(4k-3), \quad \chi=-2,-4,0
\end{equation*}
Dehn twists along Lagrangian spheres.

\subsubsection{Case $\ind_{X}=1, \int_{X}c_{1}(A_{X})^{3}=18 \implies g(Z, A_{X})=10$}
\be
\item[(1-9)] $X$ is a $\dim_{\C}=5$ Grassmann bundle for the exceptional Lie group $G_{2}$ inside of $\Proj^{13}$, intersected with two hyperplanes having $\chi(X)=0$.
\item[(2-11)] $X$ is a blowup of a $\deg=3$ hypersurface in $\Proj^{4}$ along a line having $\chi(X)=-4$.
\item[(3-3)] $X$ is a $\deg=(1,1,2)$ divisor on $\Proj^{1} \times \Proj^{1} \times \Proj^{2}$ having $\chi(X)=2$.
\item[(3-4)] $X$ is obtained by taking a branched cover of $\ProjOne \times \Proj^{2}$ along a $\deg=(2,2)$ divisor and blowing up along a smooth fiber of the projection to $\Proj^{2}$, having $\chi(X)=4$.
\item[(8-1)] $X$ is a blowup of $\Proj^{2}$ at six points times $\ProjOne$ having $\chi(X)=18$.
\ee
Again, the first member of the family corresponds to the model of \cite{Mukai:Gleg10}.

Theorem \ref{Thm:K3PencilPair} gives us Weinstein $4$-manifolds $(F_{k}, \beta_{k})$ and factorizations of $\tau_{\partial F_{k}}$ into products of
\begin{equation*}
- \chi + 48k + 12k^{2}(4k-3), \quad \chi=-4,0,4,2,18
\end{equation*}
Dehn twists along exact Lagrangian spheres. The case $k=1$ is Theorem \ref{Thm:MainExample}.

\subsubsection{Case $\ind_{X}=2, \int_{X}c_{1}(A_{X})^{3}=48 \implies g(Z, \Line)=7$}\label{Sec:ObaDouble}
\be
\item[(2-32)] $X$ is a divisor of $\deg=(1,1)$ in $\Proj^{2} \times \Proj^{2}$ with $\chi(X)=6$. Here $\Line$ is of degree $(1,1)$ so that $\Line^{\otimes 2} = A_{X}$. Since $\Line$ is the pullback of $\bigO_{1}$ from the Segre embedding $\Proj^{2} \times \Proj^{2} \rightarrow \Proj^{8}$ and so is very ample.
\item[(3-27)] $X= (\Proj^{1})^{3}$ having $\chi(X)=8$. Here $\Line$ has $\deg=(1,1,1)$ giving $\Line^{\otimes 2} = A_{X}$. Again $\Line$ is very ample as it is determined by a Segre embedding.
\ee

As mentioned in \S \ref{Sec:K3Classification}, the $K3$ surfaces determined as anticanonical divisors have projective models described in \cite{Mukai:Gleg10}. These $X$ are exactly the Fano $3$-folds considered in \cite{Oba:FourDRelation}. Here Theorem \ref{Thm:K3PencilPair} gives us Weinstein $4$-manifolds $(F_{k}, \beta_{k})$ given as the smooth fibers of Lefschetz fibrations associated to the amply line bundles $A_{X}^{\otimes k} \to X$ and factorizations of $\tau_{\partial F_{k}}$ into products of
\begin{equation}\label{Eq:K3ObaCount}
- \chi + 48k(4k^{2} - 3k + 1), \quad \chi=6,8
\end{equation}
Dehn twists along exact Lagrangian spheres. We will generalize these examples in the next section by considering Lefschetz pencils associated to $\Line^{\otimes k}$ (rather than $A_{X}^{\otimes k} = \Line^{\otimes 2k}$) as considered in \cite{Oba:FourDRelation}.

The complement of a divisor for $\Line$ in the $X \subset \Proj^{2} \times \Proj^{2}$ indexed (2-32) can be identified with a cotangent disk bundle $\disk^{\ast}\Proj^{2}$, and is studied from a symplectic point of view in \cite{Audin:LagrangianSkeletons, Biran:LagrangianBarriers, Torricelli:Projective}. In particular, \cite[\S 5]{Torricelli:Projective} describes a Lefschetz fibration associated to $\Line$ on this Weinstein $8$-manifold.

\subsection{Oba's example revisited}\label{Sec:ObaRevisited}

As in \S \ref{Sec:ObaDouble} above, consider $(X^{+}, \Line^{+})$ given by a $\deg=(1,1)$ hypersurface in $\Proj^{2} \times \Proj^{2}$ equipped with $\Line^{+}$ a $\deg=(1,1)$ line bundle and $(X^{-}, \Line^{-})$ is $(\ProjOne)^{3}$ with $\Line^{-}$ a $\deg=(1,1,1)$ line bundle. Then divisors $Z^{\pm}_{1} \subset X^{\pm}$ for the line bundles $\Line^{\pm} \to X^{\pm}$ are del Pezzo surfaces (that is, Fano $2$-folds) of degree $\int_{Z_{1}^{\pm}} A_{Z_{1}^{\pm}}^{2}=6$. In these cases the $\Line^{\pm}|_{Z_{1}^{\pm}}$ are the anticanonical bundles $A_{Z^{\pm}_{1}}$. Smoothly, we have equivalences
\begin{equation*}
Z^{\pm}_{1} = \Proj^{2}\#3\overline{\Proj^{2}} = \left(\Proj^{1} \times \Proj^{1}\right)\#2\overline{\Proj^{2}}.
\end{equation*}

Polarized del Pezzo surfaces $(Z, \Line=A_{X})$ of $\int_{Z} A_{Z}^{2}=6$ are isomorphic by \cite{Fujita}. Any such biholomorphism $\phi: Z^{+}_{1} \rightarrow Z^{-}_{1}$ lifts to an isomophism of anticanonical bundles, by the fact that the anticanonical bundles are constructed from the $TZ^{\pm}_{1}$. We can then consider either $(Z^{\pm}_{1}, \Line^{\pm})$ as being a projective model and obtain a pencil pair $(X^{\pm}, \Line^{\pm}, \phi)$ from Lemma \ref{Lemma:ProjectiveModel}. Applying $l$-fold cables as in Theorem \ref{Theorem:Cabling}, we get pencil pairs $(X^{\pm}, (\Line^{\pm})^{\otimes l}, \phi_{l})$ for all $l \geq 1$ which recover Oba's example \cite{Oba:FourDRelation} in the case $l=1$, and recover the examples of \S \ref{Sec:ObaDouble} when $k=2l$. The $l=2$ case gives us the polarized $K3$ surfaces $(Z^{\pm}_{1}, (\Line^{\pm})^{2}=A_{X})$ of $g=7$.

For the associated mapping class relations, we have $(F_{l}, \beta_{l})$ given by a $\deg=(l,l,l)$ hypersurface in $X^{-}=(\ProjOne)^{3}$ minus its intersection with another hypersurface of the same degree. Writing $\omega_{j} \in H^{2}(X^{-})$ for the class corresponding to a volume form on the $j$th factor of the the product, we have
\begin{equation*}
c(X^{-}) = (1 + 2\omega_{1})(1 + 2\omega_{2})(1 + 2\omega_{3}), \quad c_{1}((\Line^{-})^{\otimes l}) = l(\omega_{1} + \omega_{2} + \omega_{3}).
\end{equation*}
The number of critical points of a Lefschetz fibration $\pi^{-}_{l}$ associated to $(X^{-}, (\Line^{-})^{\otimes l})$ may then be computed using the first formula in Lemma \ref{Lemma:FanoCritCount}. From Theorem \ref{Thm:BoundaryRelation} we can count the number of critical points of a Lefschetz fibration $\pi^{+}_{l}$ associated $(X^{+}, (\Line^{+})^{\otimes l})$ using $\# \Crit \pi^{-}_{l}$ and the $\chi(X^{\pm})$. The counts are
\begin{equation*}
\#\Crit \pi^{\pm}_{l} = -\chi(X^{\pm}) + 12l(2l^{2} - 3l + 2) , \quad \chi(X^{+}) = 6, \quad \chi(X^{-}) = 8.
\end{equation*}
As expected, the cases $2k=l$ recover the counts of Equation \eqref{Eq:K3ObaCount}.

\textsc{CNRS, Institut de Mathématiques de Jussieu-Paris Rive Gauche, Sorbonne Université, Paris, France}\par\nopagebreak
\textit{Email:} \href{mailto:russell@imj-prg.fr}{russell@imj-prg.fr}\par\nopagebreak
\textit{URL:} \href{https://www.russellavdek.com/}{russellavdek.com}

\end{document}